\documentclass[conference]{IEEEtran}
\IEEEoverridecommandlockouts
\pdfoutput=1

\usepackage{graphicx}
\usepackage{graphics}
\usepackage{svg}
\usepackage{float}
\usepackage{comment}
\usepackage{xcolor}
\usepackage{url}
\usepackage[normalem]{ulem}
\usepackage{soul}
\usepackage{setspace}
\usepackage{subcaption}

\usepackage{amsmath}
\usepackage{amssymb}
\usepackage{amsthm}
\usepackage{mathtools}
\usepackage{mathabx}

\usepackage{algorithm}
\usepackage{algpseudocode}

\usepackage{cite}
\usepackage[hang,flushmargin]{footmisc}
\usepackage[caption=false,font=footnotesize]{subfig}

\usepackage{tikz}
\usepackage{pgfplots}
\usepackage{pgfplotstable}
\usepackage{graphviz}
\usetikzlibrary{math}
\usetikzlibrary{shapes,arrows}
\usetikzlibrary{arrows.meta}
\usetikzlibrary{positioning}
\tikzset{>={Latex[width=2.5mm,length=2.5mm]}}
\tikzstyle{block}=[draw opacity=0.7,line width=1.4cm]
\pgfplotsset{compat=1.18}
\usetikzlibrary{patterns}
\usepackage{subcaption}
\usetikzlibrary{arrows.meta,positioning,fit,calc,backgrounds,decorations.pathreplacing}

\newcommand{\boxend}{\hfill \ensuremath{\Box}}
\newcommand{\longthmtitle}[1]{\mbox{}\textit{{(#1):}}}

\newtheorem{thm}{Theorem} 
\newtheorem{rem}{Remark} 
\newtheorem{cor}{Corollary} 
\newtheorem{lem}{Lemma} 
\newtheorem{defn}{Definition}

\newcommand{\IF}{\mathcal{I}}

\begin{document}

\title{\Large{The Price of Feasibility: Greedy Approximation Bounds for
String Supermodular Optimization over Oracle-Conditioned Greedoids}}

\author{Joan Vendrell$^{\dagger}$, Russell Bent$^{\ddagger}$,
and Solmaz Kia$^{\dagger}$, \textit{Senior Member, IEEE} 
\thanks{$^\dagger$ First and third authors are with the Mechanical and Aerospace Engineering Department of University of California Irvine, Irvine, CA, USA
({\tt\small jvendrel,solmaz@uci.edu}). Their work was supported by NSF Award ECCS 2452149. $^{\ddagger}$The second author is with the Los Alamos National Laboratory, NM, USA
    }
}

\maketitle

\begin{abstract}
Greedy algorithms efficiently approximate combinatorial optimization problems, but their guarantees weaken when feasibility couples combinatorial structure with global physical constraints. We study monotone nondecreasing supermodular minimization over the bases of a graphic greedoid under physics-induced constraints. We model physics-informed selection using a look-ahead oracle that identifies candidates extendable to a feasible basis, yielding the \emph{Conditioned Sequential Greedy Algorithm}. We derive a closed-form approximation bound, its \emph{price of feasibility}, based on the variability of oracle-restricted candidate sets and a probabilistic correction for unobserved elements. As a case study, we show that \texttt{FORWARD}, an algorithm for multi-source radial network reconfiguration, instantiates this framework, and numerically demonstrate the bound tightness and the feasibility–optimality trade-off quantification.
\end{abstract}

\begin{IEEEkeywords}
Conditioned Sequential Greedy Algorithm; Supermodular Optimization;
Greedoid; Feasibility Oracle.
\end{IEEEkeywords}


\section{Introduction}
\label{sec:intro}

Combinatorial optimization is central to dynamical systems and network design, spanning sensor deployment, scheduling, resource allocation, and network topology design~\cite{CP-KS:98,BG-MM:04,AK-DG:14bookChap,SSK:26-ency}. Structured problems such as sensor placement and scheduling are often formulated as submodular maximization under cardinality or matroid constraints~\cite{VT-AJ-GJP:16,JQ-IY-RR:19,RL-NM-RH:23}. The Sequential Greedy Algorithm (\texttt{SGA}) then achieves approximation ratios of $(1-1/e)$ and $1/2$, respectively~\cite{GN-LW-MF:78,MLF-GLN-LAW:78}. These guarantees, however, require objective evaluation on arbitrary partial solutions.

Indeed, in many cyber-physical systems, greedy selection based solely on local cost is inadmissible because objective values are well-defined only under global structural constraints. For instance, network flow and routing problems require connectivity to one or more sources for flows and thus costs to be evaluated. In the single-source setting, Khodabakhsh et al.~\cite{AK-GY-SB-EV-MC-TL-EP:18} addressed this issue by formulating radial network reconfiguration as supermodular minimization over a graphic matroid. Their method performs branch exchanges from an initially feasible spanning tree (\emph{basis}), preserving flow balance at every iteration. This decouples physical feasibility from greedy selection, enabling standard matroid analysis and the first approximation guarantees for single-source radial reconfiguration.

However, this approach relies on a pre-defined feasible basis to initialize the exchange, effectively bypassing the challenge of ensuring a source-to-sink path is present during the growth of the network. Hence, this formulation does not extend when a solution must be constructed from scratch, and when the existence of a flow-consistent path must be actively certified at every step of the greedy expansion.

\begin{figure}[t]
    \centering
    \begin{tikzpicture}[scale=0.8, every node/.style={font=\footnotesize}]
    
    \draw[thick]
      plot[smooth cycle, tension=0.9] coordinates {
        (-3.2,-1.8) (-3.6,0.8) (-2.6,2.5) (-1.0,3.2)
        (1.2,3.0) (2.6,2.5) (3.4,1.0) (3.1,-1.1)
        (2.4,-2.5) (0.8,-3.0) (-1.4,-2.8)
      };
    \node at (0,-2.35) {\textbf{All combinatorial space of $\mathcal{P}$}};
    
    \draw[thick]
      plot[smooth cycle, tension=0.95] coordinates {
        (-2.1,-1.2) (-2.4,0.4) (-1.8,1.8) (-0.9,2.2)
        (0.0,2.0) (0.7,1.4) (1.1,2.1) (2.0,1.8)
        (2.4,0.6) (2.2,-0.6) (1.5,-1.4) (0.6,-1.8)
        (-0.4,-1.6) (-1.2,-2.1)
      };

    \begin{scope}
      \clip plot[smooth cycle, tension=0.95] coordinates {
        (-2.1,-1.2) (-2.4,0.4) (-1.8,1.8) (-0.9,2.2)
        (0.0,2.0) (0.7,1.4) (1.1,2.1) (2.0,1.8)
        (2.4,0.6) (2.2,-0.6) (1.5,-1.4) (0.6,-1.8)
        (-0.4,-1.6) (-1.2,-2.1)
      };
      \fill[pattern=north east lines, pattern color=gray!35]
        (-3.5,-3.2) rectangle (3.5,3.2);
    \end{scope}
    
    \begin{scope}
      \clip plot[smooth cycle, tension=1.0] coordinates {
        (-0.9,-0.2) (-0.6,0.8) (0.2,1.1) (1.1,1.0)
        (1.5,0.2) (1.3,-0.8) (0.6,-1.2) (-0.2,-1.1)
        (-0.8,-0.7)
      };

      \fill[white] (-2,-2) rectangle (2,2);
    \end{scope}

    \begin{scope}[yshift=0.5]
    \draw[thick, fill=blue!15]
      plot[smooth cycle, tension=1.0] coordinates {
    (-0.15,0.15) (-0.35,0.60) (-0.1,1.10) (0.35,1.20)
    (0.7,0.85) (0.55,0.25) (0.15,0.05)};
    \node at (0.2,0.62) {\(\mathcal{S}^\star\)};
    \end{scope}
    
    \draw[dashed, line width=1.3pt]
      plot[smooth cycle, tension=1.0] coordinates {
        (-0.9,-0.2) (-0.6,0.8) (0.2,1.1) (1.1,1.0)
        (1.5,0.2) (1.3,-0.8) (0.6,-1.2) (-0.2,-1.1)
        (-0.8,-0.7)
      };
    \node[align=center] at (0.3,-0.55) {\footnotesize \textbf{Oracle} \\ \footnotesize \textbf{Space} $\mathcal{C}$};
    
    \node[align=center] at (-1.55,0.85) {\tiny unobserved \\ \tiny feasible space};
    
    
    
    
    
    \node[
      draw,
      rounded corners=4pt,
      thick,
      align=center,
      minimum width=0.8cm,
      minimum height=0.8cm
    ] (oraclebox) at (5.8,0.2) {\textbf{Oracle \(\mathcal{F}^\prime\)}\\filters feasible\\extensions};
    
    \node at (5.75,1.35) {\textbf{online certification}};
    
    \draw[-{Latex[length=3.5mm,width=2.2mm]}, thick] (1.45,0.45) -- (4.5,0.45);
    \draw[-{Latex[length=3.5mm,width=2.2mm]}, thick] (4.45,-0.05) -- (1.55,-0.05);
    
    \node at (-0.2,-1.45) {\textbf{Feasible Space} $\mathcal{F}$};
    \end{tikzpicture}
\caption{Oracle-restricted exploration of an approximated feasible region. The hatched area denotes feasible solutions outside the oracle-restricted search space. Note that the optimal space can yield outside the oracle-restricted space.}
\label{fig:oracle_irregular}
\end{figure}

In a more general setting, structural requirements can be enforced by moving from matroid search spaces to the richer class of \emph{greedoids}~\cite{BK-LL:81,JY:85}. In greedoids, the hereditary property of matroids is specialized to an accessibility requirement, where feasibility becomes order-dependent. While recent theoretical work has explored the polymatroid representation of greedoids to justify the success of greedy exchange properties~\cite{RS-VKG:26}, and others have established optimality guarantees for \emph{string submodular} functions over ordered structures~\cite{ZZ-EKPC-AP-WM-SDH:12,ZZ-EKPC-AP-WM:16}, these works typically assume that structural accessibility alone is sufficient for feasibility. However, in the cyber-physical problems of interest here, even a solution that respects the greedoid's structural accessibility must also be completable to a basis satisfying continuous constraints imposed by the underlying physics. Since these physical constraints are often only verifiable globally at the basis level, the greedy search must prune the candidate space beyond the structural conditioning of the greedoid alone. This necessitates a feasibility oracle that actively filters the greedoid expansion to prevent termination in physically inadmissible states.

This simultaneous conditioning on both structural and physical feasibility introduces a fundamental and largely unresolved tension. On one hand, physics-induced infeasibility at each iteration of a greedy algorithm may restrict the candidate set non-uniformly, pruning branches of the search space that might otherwise contain high-quality solutions. On the other hand, feasibility-preserving decisions may sacrifice local optimality to ensure long-term consistency, and the resulting gap between the conditioned sequential greedy solution and the global optimum---what we term the \emph{price of feasibility}---remains, in general, uncharacterized, see Fig.~\ref{fig:oracle_irregular}. 

More precisely: \emph{how can one expand greedy algorithms over restricted combinatorial spaces while guaranteeing termination in a physically feasible basis?} Establishing rigorous optimality guarantees for such \emph{conditioned sequential greedy algorithms} is an open theoretical problem with direct implications for infrastructure network design and distributed control, where exact optimization is computationally intractable. This paper investigates this problem by introducing a formal framework for conditioned greedy optimization and deriving novel approximation bounds for this class of algorithms.

\paragraph*{Contributions}
We introduce a framework for minimizing a monotone nondecreasing supermodular function\footnote{For $C\geq\max_{\mathcal{S}\in\mathcal{B}(\mathcal{I})}f(\mathcal{S})$, the function $C-f$ is nonnegative and submodular, with $\arg\min f=\arg\max(C-f)$; thus, greedy minimization selects the smallest rather than the largest marginal value~\cite{GN-LW-MF:78}.} over graphic-greedoid bases subject to global physical equality constraints. Physics-informed candidate selection is modeled by a look-ahead feasibility oracle $\mathcal{F}$ that retains only elements extendable to a physically feasible basis. This formulation yields the \emph{Conditioned Sequential Greedy Algorithm} and reveals how incomplete oracle coverage affects optimality.

We derive a closed-form approximation bound governed by the variability of oracle-restricted candidate sets, with a probabilistic correction for unobserved elements that quantifies the \emph{price of feasibility}. We then show that \texttt{FORWARD} is a canonical instance for multi-source radial network reconfiguration, a problem commonly addressed using MINLP or heuristics without guarantees~\cite{JAT:12,NS:19,MB-FW:89,JV-RB-SK:25arxiv}. Applying our bound establishes the first formal optimality guarantees for \texttt{FORWARD}, while experiments on standard IEEE feeders demonstrate its tightness.

\paragraph*{Organization}
Section~\ref{sec:preliminaries} introduces the preliminary definitions. Section~\ref{sec:problem} formalizes the abstract problem, presents the Conditioned Sequential Greedy Algorithm, and defines the role of the look-ahead feasibility oracle. Section~\ref{sec:algorithm} proves the main optimality gap theorem. Section~\ref{sec:forward} instantiates the framework for \texttt{FORWARD} as a canonical case study and validates the theoretical predictions experimentally. Section~\ref{sec:conclusion} concludes.

\section{Preliminaries}
\label{sec:preliminaries}
This section collects the combinatorial and functional definitions used throughout the paper. We introduce the greedoid framework as the foundational structure for the feasible set of the optimization problem, and define supermodularity restricted to a greedoid domain, which is the natural setting for problems where the objective function is only defined over combinatorially feasible configurations.

In classical submodular optimization, a configuration is typically viewed as a set $\mathcal{S} \subseteq \mathcal{P}$, where the order of elements is irrelevant. However, in many cyber-physical systems, the "state" of the system depends on the sequence in which elements are added. 

Let $\mathcal{P}$ be a finite ground set. We denote by $\mathcal{P}^*$ the set of all finite \emph{strings} (sequences) of distinct elements from $\mathcal{P}$. For any two strings $\alpha, \beta \in \mathcal{P}^*$, we distinguish them not only by their content but by their ordering. Specifically, we consider functions $f: \mathcal{P}^* \to \mathbb{R}$ where:
\begin{equation}
    f((s_1, s_2, s_3)) \neq f((s_2, s_1, s_3)).
\end{equation}
This distinction is vital for power systems, where the marginal cost of adding a line $e$ depends on the specific topology already established by the sequence $\mathcal{S}$.

\subsection*{Combinatorial Structure}

\begin{defn}[Greedoid~\cite{BK-LL:81}]
\label{def:greedoid}
Let $\mathcal{P}$ be a finite ground set. A pair
$(\mathcal{P}, \IF)$ is a \emph{greedoid} if:
\begin{itemize}
    \item[\textbf{(G1)}] $\emptyset \in \IF$.
    \item[\textbf{(G2)}] \textbf{Accessibility:} For every
        nonempty $\mathcal{S} \in \IF$, $\exists\, s \in
        \mathcal{S}$ such that
        $\mathcal{S} \setminus \{s\} \in \IF$.
    \item[\textbf{(G3)}] \textbf{Asymmetric Exchange:} For
        any $\mathcal{S}_1, \mathcal{S}_2 \in \IF$ with
        $|\mathcal{S}_1| < |\mathcal{S}_2|$, $\exists\,
        s_2 \in \mathcal{S}_2 \setminus \mathcal{S}_1$
        such that
        $\mathcal{S}_1 \cup \{s_2\} \in \IF$.
\end{itemize}
\boxend
\end{defn}

\begin{defn}[Basis of a Greedoid~\cite{BK-LL:81}]
\label{def:basis}
A \emph{basis} of a greedoid $(\mathcal{P}, \IF)$ is a maximal feasible set $\mathcal{S} \in \IF$, i.e., there exists no $e \in \mathcal{P} \setminus \mathcal{S}$ such that $\mathcal{S} \cup \{e\} \in \IF$. The collection of all bases is denoted $\mathcal{B}(\mathcal{I})$.  \boxend
\end{defn}

\begin{rem}[]
    Asymmetric exchange property of greedoids allows us to capture non-commutative nature of the process distinguishing between $f((i,j))\neq f((j,i))$, unlike symmetry restricted by matroids, allowing a unique mapping between $f(\mathcal{S})\to\mathbb{R}$.
    \label{rem:importance_greedoid}
\end{rem}

 Elements of a greedoid are sequences, i.e., $\mathcal{S}=(s_1,s_2,s_3)\neq (s_2,s_3,s_1)$. Unlike matroids, greedoid bases need not share the same cardinality in general; however, for the graphic greedoids over a graph of $N$ nodes arising from rooted spanning forest constructions considered in this work, all bases have common cardinality $N-1$.

\begin{defn}[$\mathcal{R}$-Rooted Spanning Forest Greedoid]
\label{def:polyforest_greedoid}
Let $\mathcal{G} = (\mathcal{V}, \mathcal{P})$ be a directed graph with designated root set $\mathcal{R} \subseteq \mathcal{V}$. The feasible collection
\begin{align*}
    \mathcal{I} = \bigl\{&\mathcal{S} \subseteq \mathcal{P} \;:\; \mathcal{S} \text{ is acyclic, every node has in-degree at}\\ &\text{most one, and every component of } (\mathcal{V}, \mathcal{S}) \text{ contains at}\\&\text{least one node in } \mathcal{R} \bigr\}
\end{align*}
defines a greedoid $(\mathcal{P}, \mathcal{I})$. A basis $\mathcal{S} \in \mathcal{B}(\mathcal{I})$ is a spanning forest of $\mathcal{G}$ in which every tree contains at least one node from $\mathcal{R}$. Nodes in $\mathcal{R}$ are not required to have in-degree zero: a root node may receive from another root node when it acts as an intermediary connecting downstream components beyond its individual capacity. \boxend
\end{defn}

The greedoid $(\mathcal{P}, \mathcal{I})$ is a generalization of the directed branching greedoid~\cite{BK-LL:81} to a multi-root setting where nodes in $\mathcal{R}$ are not required to have in-degree zero, accommodating configurations in which a root node receives from another root node to relay supply to downstream components.

\subsection*{Function Structure}
\begin{defn}[Normalized Monotone Non-Decreasing Function over a Greedoid]
\label{def:monotone}
Over a greedoid $(\mathcal{P}, \mathcal{I})$, a set function $f : \mathcal{I} \to \mathbb{R}_{\geq 0}$ is \emph{normalized} if $f(\emptyset) = 0$ and \emph{monotone non-decreasing} if for all $\mathcal{S}_1, \mathcal{S}_2 \in \mathcal{I}$ with $\mathcal{S}_1 \subseteq \mathcal{S}_2$, $f(\mathcal{S}_1) \leq f(\mathcal{S}_2)$. \boxend
\end{defn}

\begin{defn}[Supermodular Function over a Greedoid]
\label{def:supermodular}
Let $(\mathcal{P}, \mathcal{I})$ be a greedoid. A set function $f : \mathcal{I} \to \mathbb{R}_{\geq 0}$ is \emph{supermodular over} $\mathcal{I}$ if for all $\mathcal{S}_1 \subseteq \mathcal{S}_2 \in \mathcal{I}$ and all $e \in \mathcal{P} \setminus \mathcal{S}_2$ such that both $\mathcal{S}_1 \cup \{e\}$ and $\mathcal{S}_2 \cup \{e\}$ remain in $\mathcal{I}$,
\[
    \Delta(e \mid \mathcal{S}_2) \;\geq\;
    \Delta(e \mid \mathcal{S}_1),
\]
where $\Delta(e \mid \mathcal{S}) := f(\mathcal{S} \cup \{e\}) - f(\mathcal{S})$ is the \emph{marginal cost} of adding $e$ to $\mathcal{S}$.\footnote{For a \emph{normalized} and monotone non-decreasing supermodular function, $\Delta(e \mid \mathcal{S}) \geq 0$.} \boxend
\end{defn}

Note that while classical theory defines supermodularity over the full power set $2^{\mathcal{P}}$, restricting the domain to $\mathcal{I}$ is not a relaxation---it is a necessity in problems where $f(\mathcal{S})$ depends on a continuous variable uniquely determined by $\mathcal{S}$, making $f$ simply undefined for sets outside the greedoid. Since greedy searches operate exclusively within $\mathcal{I}$, requiring the property to hold elsewhere is both impossible and unnecessary. This formulation therefore broadens the framework to a wider class of cyber-physical problems than the classical setting admits.

\section{Abstract Problem Setting}
\label{sec:problem}

We consider a mixed combinatorial and continuous optimization problem, where the objective couples a discrete set variable $\mathcal{S} \subseteq \mathcal{P}$ over a finite ground set $\mathcal{P}$ of candidate elements with a continuous variable $\mathbf{x}(\mathcal{S}) \in \mathbb{R}^{|\mathcal{S}|}$ that is uniquely determined by $\mathcal{S}$ through a system of equality constraints. Let $\mathcal{I}$ denote a greedoid over $\mathcal{P}$ with basis collection $\mathcal{B}(\mathcal{I})$, and let $g : \mathbb{R}^{|\mathcal{S}|} \to \mathbb{R}^k$ encode physical or operational equality constraints that must hold at every complete configuration. The abstract problem reads:

\begin{subequations}\label{eq:abstract}
\begin{align}
    \underset{\mathcal{S}}{\text{minimize}}
    \quad
    & f\!\left(\mathcal{S},\mathbf{x}(\mathcal{S})\right)
    \label{eq:abstract_obj}\\
    \text{subject to} \quad
    & \mathcal{S} \in \mathcal{B}(\mathcal{I}),
    \label{eq:abstract_greedoid}\\
    & g\!\left(\mathbf{x}(\mathcal{S})\right) = 0.
    \label{eq:abstract_feasibility}
\end{align}
\end{subequations}
Here $\mathbf{x}(\mathcal{S})$ denotes the restriction of a global continuous variable to the components activated by $\mathcal{S}$; its ambient dimension is immaterial to the analysis. Since $\mathbf{x}(\mathcal{S})$ is uniquely determined by $\mathcal{S}$ through~\eqref{eq:abstract_feasibility}, we write $f(\mathcal{S})$ as shorthand for $f(\mathcal{S},\mathbf{x}(\mathcal{S}))$ and treat the problem as a pure set optimization over $\mathcal{B}(\mathcal{I})$. The greedoid constraint $\mathcal{S} \in \mathcal{B}(\mathcal{I})$ encodes structural requirements that are path-dependent: in radial distribution network reconfiguration, every load node must trace a directed path to at least one of the sources in source set nodes $\mathcal{V}_g$; in multi-terminal network formation, every agent must maintain a path to a designated gateway. In both cases, removing an edge from a valid configuration may disconnect a node from all roots, which is precisely why a matroid is insufficient and a greedoid is the correct domain, as established in Section~\ref{sec:preliminaries}. Optimization is restricted to $\mathcal{B}(\mathcal{I})$ rather than arbitrary members of $\mathcal{I}$ because $f(\mathcal{S})$ is meaningful only when $\mathcal{S}$ is a complete spanning configuration: $\mathbf{x}(\mathcal{S})$ encodes global quantities, flows, potentials, or allocations, that are uniquely determined and physically well-posed only when every node is served.

Minimizing a supermodular function over the bases of a greedoid is NP-hard in general~\cite{CP-KS:98}. Consequently, Problem~\eqref{eq:abstract} is at least NP-hard, motivating tractable approximation algorithms with provable guarantees.

\begin{algorithm}[t]
\caption{Conditioned Sequential Greedy Algorithm}
\label{alg:conditioned_greedy}
\begin{algorithmic}[1]
\Require Ground set $\mathcal{P}$, cost function $f$,
         greedoid $\IF$,
         look-ahead oracle $\mathcal{F}^\prime$
\Ensure $\bar{\mathcal{S}} \in \mathcal{B}(\mathcal{I})$
        with $g(\mathbf{x}(\bar{\mathcal{S}})) = 0$
\State $\bar{\mathcal{S}} \leftarrow \mathcal{S}^\star_0$ \Comment{Where $\mathcal{S}^\star_0\subseteq\mathcal{S}^\star$}
\While{$\bar{\mathcal{S}} \notin \mathcal{B}(\mathcal{I})$}
    \State $\mathcal{C} \leftarrow
    \bigl\{e \in \mathcal{P} \setminus \bar{\mathcal{S}}
    \;\big|\;
    \bar{\mathcal{S}} \cup \{e\} \in \IF
    \ \text{and}\ 
    \mathcal{F}^\prime(\bar{\mathcal{S}} \cup \{e\}) = 1
    \bigr\}$
    \Comment{Greedoid membership and look-ahead
    physical feasibility}
    \State $\bar{s} \leftarrow
           \arg\min_{s \in \mathcal{C}}\;
           \Delta(s \mid \bar{\mathcal{S}})$
    \Comment{Minimum marginal cost}
    \State $\bar{\mathcal{S}} \leftarrow
           \bar{\mathcal{S}} \cup \{\bar{s}\}$
\EndWhile
\end{algorithmic}
\end{algorithm}
To this end, we propose the \emph{Conditioned Sequential Greedy Algorithm} (Algorithm~\ref{alg:conditioned_greedy}), which builds a solution incrementally by selecting at each step the element of minimum marginal cost from a conditioned candidate set. The candidate set at each step is restricted to elements that are not only greedoid-feasible --- that is, their addition keeps the partial solution within $\mathcal{I}$ --- but also physically completable to a basis satisfying $g(\mathbf{x}(\mathcal{S}')) = 0$. This second condition is enforced by a \emph{look-ahead physical feasibility oracle} $\mathcal{F}$, which screens candidates before selection and may be implemented through prioritization rules, reachability checks, or other look-ahead measures. We defer the concrete realization of $\mathcal{F}$ to Section~\ref{sec:forward}, where the \texttt{FORWARD} Algorithm is introduced as a specific instantiation of this framework.

\begin{defn}[Look-Ahead Physical Feasibility Oracle]
\label{def:oracle}
A \emph{look-ahead physical feasibility oracle} $\mathcal{F} : \mathcal{I} \to \{0,1\}$ returns $\mathcal{F}(\mathcal{S}) = 1$ if and only if there exists a basis $\mathcal{S}' \in \mathcal{B}(\mathcal{I})$ with $\mathcal{S} \subseteq \mathcal{S}'$ such that $g(\mathbf{x}(\mathcal{S}')) = 0$. When $\mathcal{S} \in \mathcal{B}(\mathcal{I})$, this reduces to $\mathcal{F}(\mathcal{S}) = 1$ iff $g(\mathbf{x}(\mathcal{S})) = 0$. \boxend
\end{defn}

The oracle is distinct from greedoid membership: structural requirements such as acyclicity and rooted connectivity are enforced by $\mathcal{I}$ through local edge selection. The oracle addresses an orthogonal question, whether a greedoid-feasible partial solution can still be completed to a basis satisfying the global physical constraint $g(\mathbf{x}) = 0$, and is necessary precisely because this property cannot be verified locally or incrementally from the partial solution alone. Termination of Algorithm~\ref{alg:conditioned_greedy} is implicit in Definition~\ref{def:oracle}: if at some iteration $\mathcal{F}(\bar{\mathcal{S}}\!\cup\!\{e\}) \!\!=\!\! 1$ was satisfied, then $\bar{\mathcal{S}}$ is already contained in some physically feasible basis $\mathcal{S}' \in \mathcal{B}(\mathcal{I})$, and the greedoid asymmetric exchange property~(G3) guarantees the existence of at least one augmenting element $e' \in \mathcal{S}' \setminus \bar{\mathcal{S}}$ with $\mathcal{F}(\bar{\mathcal{S}} \cup \{e'\}) \!\!=\!\! 1$, keeping $\mathcal{C}$ non-empty until a basis is reached.

\section{Optimality Gap of the Conditioned Sequential Greedy Algorithm}
\label{sec:algorithm}
In this section we analyze the optimality gap of Algorithm~\ref{alg:conditioned_greedy}. The key element is the structure of the oracle space $\mathcal{C}_t$ at iteration $t$, whose definition determines which elements are ever considered for selection and therefore governs both the quality of the solution and the need for a probabilistic correction. Classical sequential greedy for  minimization over a uniform matroid  selects at each step the element $e$ with the smallest marginal cost from $\mathcal{P} \setminus \bar{\mathcal{S}}$. In Problem~\eqref{eq:abstract}, unrestricted expansion may violate either the greedoid structure or the prospect of satisfying the physical feasibility constraint~\eqref{eq:abstract_feasibility}. This is resolved by gating candidates through (line 3)
\[
    \mathcal{C}_t = \bigl\{e \in \mathcal{P} \setminus
    \mathcal{S}_{t-1} \;:\; \mathcal{S}_{t-1} \cup \{e\}
    \in \mathcal{I}\ \text{and}\
    \mathcal{F}(\mathcal{S}_{t-1} \cup \{e\}) = 1\bigr\},
\]
where the first condition enforces greedoid membership and the second enforces look-ahead physical feasibility. The key distinction from the classical setting lies in the structure of $\mathcal{C}_t$: under a uniform matroid constraint, $\mathcal{C}_t$ coincides exactly with all feasible expansions $\mathcal{S}_t \to \mathcal{S}_{t+1}$ in $\mathcal{I}$, and the hereditary (M2) and symmetric exchange (M3) properties together guarantee that no optimal element is ever permanently excluded, requiring no probabilistic correction. Neither guarantee holds here. In a greedoid space under physical constraints, the reliance on a \emph{look-ahead physical feasibility oracle} restricts $\mathcal{C}_t$ to a strict subset of all feasible expansions, as illustrated in Fig.~\ref{fig:oracle_irregular}, and a sub-optimal greedy choice $e \in \mathcal{S}_t \setminus \mathcal{S}^{\star}$ may permanently exclude some optimal element $e^{\star} \in \mathcal{S}^{\star}$ from all future $\mathcal{C}_{t'}$, $t' > t$. We call this the \emph{unobserved items phenomenon}
(Definition~\ref{def:unobserved}).

\begin{defn}[Unobserved Items]
\label{def:unobserved}
An element $e \in \mathcal{P}$ is \emph{unobserved}, denoted $e \in \mathcal{U}$, if it is never placed in any candidate set $\mathcal{C}_t$ during Algorithm~\ref{alg:conditioned_greedy} execution. We write $m' := |\mathcal{P} \setminus \mathcal{U}|$ for the number of \emph{observed} elements. \boxend
\end{defn}

Since unobserved elements may include members of $\mathcal{S}^{\star}$, the optimality gap analysis carries a probabilistic dimension: the quality guarantee of Theorem~\ref{thm:gap} holds with a probability that degrades as more optimal elements are permanently excluded by the oracle, and this probabilistic correction is the sole additional cost imposed by the greedoid structure over the classical matroid case. Figure~\ref{fig:observability} illustrates this phenomenon on a six-node example running \texttt{FORWARD}: the observed set grows at each iteration but never covers $\mathcal{P}$ fully because the oracle restricts the candidate set at every step.

Let $\mathcal{S}^{\star} \in \mathcal{B}(\mathcal{I})$ denote an optimal solution to Problem~\eqref{eq:abstract} and $\mathcal{S}_t$ the partial solution after $t$ iterations of Conditioned Sequential Greedy Algorithm (Algorithm~\ref{alg:conditioned_greedy}). To establish the approximation guarantees of Algorithm~\ref{alg:conditioned_greedy}, we characterize the structural relationship between the current solution $\mathcal{S}_t$ and an optimal solution $\mathcal{S}^{\star}$. The following lemmas show that, whenever elements of $\mathcal{S}^{\star}$ appear in the candidate set, the exchange structure of $\mathcal{I}$ guarantees that they are compatible with $\mathcal{S}_t$ and can be used for greedy comparison.

\begin{lem}[Individual Augmentation]
\label{lem:individual}
At any iteration $t$, for every $e \in \mathcal{S}^{\star} \cap \mathcal{C}_t$, $\mathcal{S}_t \cup \{e\} \in \mathcal{I}$.
\end{lem}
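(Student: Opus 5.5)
The plan is to argue directly from the definition of the gated candidate set $\mathcal{C}_t$ in line~3 of Algorithm~\ref{alg:conditioned_greedy}. The lemma asks for greedoid membership only, not physical feasibility, so the oracle $\mathcal{F}$ plays no role beyond certifying that $e$ passed the gate. First I will fix the indexing. $\mathcal{C}_t$ is built against the partial solution that is current when candidates are screened. I will write $\mathcal{S}_t$ for that set, which matches the displayed definition of $\mathcal{C}_t$ after shifting $t-1 \mapsto t$. Then take any $e \in \mathcal{S}^\star \cap \mathcal{C}_t$. By the first conjunct of the gating condition, $e \notin \mathcal{S}_t$ and $\mathcal{S}_t \cup \{e\} \in \mathcal{I}$. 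Membership in $\mathcal{S}^\star$ is not even needed for this step. It only marks these elements as the ones used later for greedy comparison.

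Second, I will record why this is the natural statement for the later exchange argument, and check it in the concrete greedoid of Definition~\ref{def:polyforest_greedoid}. There, $\mathcal{S}_t \cup \{e\} \in \mathcal{I}$ with $e=(u,v)$ means three things: $u$ already lies in a root-containing component of $(\mathcal{V},\mathcal{S}_t)$; $v$ has in-degree zero in $\mathcal{S}_t$; and adding $e$ closes no cycle. These conditions can be read off directly, which confirms that the gate's first conjunct is exactly the greedoid feasibility the lemma claims.

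The main obstacle is the alternative reading, in which $\mathcal{C}_t$ is computed from $\mathcal{S}_{t-1}$ and the claim is about the post-selection set $\mathcal{S}_t = \mathcal{S}_{t-1}\cup\{\bar s\}$. If this reading is intended, I will split into cases. If $e=\bar s$, the claim is trivial because $\mathcal{S}_t \cup \{e\} = \mathcal{S}_t \in \mathcal{I}$. If $e \neq \bar s$, I will use the structure of the rooted forest greedoid. Adding $\bar s$ only enlarges the set of root-connected nodes, so the tail of $e$ remains root-connected. Acyclicity is also preserved, since $\bar s$ and $e$ both point into previously unreached nodes. The only possible failure is a shared head, i.e.\ $\bar s$ and $e$ enter the same node $v$; this violates the in-degree condition.

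To handle that case I would exploit $e \in \mathcal{S}^\star$. Together with the oracle certificate that $\mathcal{S}_{t-1}\cup\{\bar s\}$ extends to a feasible basis, and the basis structure (every non-root node has exactly one in-edge), this should either rule the collision out or show that the colliding $e$ is removed from all later candidate sets. In the latter case $e$ falls under the unobserved-items accounting of Definition~\ref{def:unobserved}. This is the step I expect to require care: it may force the statement to be restricted to $e \in \mathcal{S}^\star\cap\mathcal{C}_t$ with $e\notin\mathcal{S}_t$ and head not covered by $\bar s$. I would state that restriction explicitly if the collision cannot be excluded.
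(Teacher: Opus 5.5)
Your first paragraph is exactly the paper's proof. The paper also reads $\mathcal{S}_t\cup\{e\}\in\mathcal{I}$ directly off the greedoid-membership conjunct of the gate defining $\mathcal{C}_t$, silently treating $\mathcal{S}_t$ as the set being screened (the indexing you make explicit) and never using $e\in\mathcal{S}^{\star}$. Your case analysis for the literal $\mathcal{S}_{t-1}$ indexing is not needed. Under that reading the claim simply fails rather than awaiting a rescue, since both $e\in\mathcal{S}^{\star}$ and the oracle certificate are compatible with a shared head. Your acyclicity remark is also too quick: heads need only have in-degree zero, so they can be tops of other components (e.g.\ root nodes), and $\bar s=(a\to r_B)$, $e=(b\to r_A)$, with $a$ under $r_A$ and $b$ under $r_B$, are each admissible yet together close a directed cycle.
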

\begin{proof}
By definition of the candidate set $\mathcal{C}_t$, every element $e \in \mathcal{C}_t$ satisfies $\mathcal{S}_t \cup \{e\} \in \mathcal{I}$. Since $e \in \mathcal{S}^{\star} \cap \mathcal{C}_t$, the claim follows immediately.
\end{proof}

\begin{lem}[Joint Augmentation]
\label{lem:joint}
At any iteration $t$,
\[
\mathcal{S}_t \cup
\bigl(\mathcal{S}^{\star} \cap \mathcal{C}_t\bigr)
\in \mathcal{I}.
\]
\end{lem}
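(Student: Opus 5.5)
The plan is to verify the three defining conditions of the $\mathcal{R}$-rooted spanning forest greedoid of Definition~\ref{def:polyforest_greedoid} directly for $\mathcal{S}_t \cup E'$, where $E' := \mathcal{S}^{\star} \cap \mathcal{C}_t$. The only input is Lemma~\ref{lem:individual}, i.e. $\mathcal{S}_t \cup \{e\} \in \mathcal{I}$ for each $e \in E'$ separately, together with $\mathcal{S}^{\star} \in \mathcal{I}$. I do not expect an abstract argument from (G1)--(G3) alone to work, since joint augmentation fails in general greedoids; the proof must use the graphic structure.

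\textbf{Step 1 (in-degree).} Write $e=(u,v)$ for $e\in E'$. Since $\mathcal{S}_t \cup \{e\}$ has in-degree at most one at $v$, the head $v$ has in-degree zero in $\mathcal{S}_t$. Since $E' \subseteq \mathcal{S}^{\star}$ and $\mathcal{S}^{\star}$ has in-degree at most one everywhere, distinct elements of $E'$ have distinct heads. Hence every node has in-degree at most one in $\mathcal{S}_t \cup E'$.

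\textbf{Step 2 (root condition).} Adding edges only merges components. Every component of $(\mathcal{V},\mathcal{S}_t)$ already contains a node of $\mathcal{R}$, so every component of $(\mathcal{V},\mathcal{S}_t\cup E')$ is a union of such components and also contains a root.

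\textbf{Step 3 (acyclicity), the main obstacle.} Suppose $\mathcal{S}_t \cup E'$ contains a cycle on $k$ nodes. Such a cycle has $k$ edges, and by Step 1 each cycle node has in-degree at most one, so each has in-degree exactly one from cycle edges. The cycle is therefore a directed cycle.

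To rule this out, contract each tree of $(\mathcal{V},\mathcal{S}_t)$ to a single vertex. By Step 1, each contracted tree receives at most one edge of $E'$, and that edge enters at its unique in-degree-zero node (its top). A directed cycle in the union then projects to a directed cycle of $E'$-edges among contracted trees, with each tree entered at its top and left from some node inside it.

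I would first try to exclude this configuration using the way $\mathcal{S}_t$ was built. Algorithm~\ref{alg:conditioned_greedy} starts from $\mathcal{S}^\star_0\subseteq\mathcal{S}^\star$ and grows only through greedoid-feasible single additions. I would aim to show that this forces the top of every nontrivial tree of $\mathcal{S}_t$ to be a root, or to lie in $\mathcal{S}^\star_0$. If that holds, the projected cycle of $E'$-edges would yield a cycle inside $\mathcal{S}^{\star}$ itself, or a violation of $\mathcal{S}_t\cup\{e\}\in\mathcal{I}$ for the single edge $e$ closing the cycle, both of which are contradictions.

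If this ordering argument turns out to be insufficient, the fallback is the oracle. Each $e\in E'$ lies on a physically feasible basis extending $\mathcal{S}_t$, which I would use to argue that the edges of $E'$ cannot close a cycle through $\mathcal{S}_t$. With Steps 1--3 in hand, the three conditions of Definition~\ref{def:polyforest_greedoid} hold and $\mathcal{S}_t \cup E' \in \mathcal{I}$.
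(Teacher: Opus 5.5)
Your Steps~1 and~2 are correct. In Step~2, $(\mathcal{V},\mathcal{S}_t)$ also has isolated vertices, but a component of $\mathcal{S}_t\cup E'$ built only from them still contains a root: any $e\in E'$ joining two isolated vertices must have a root endpoint by Lemma~\ref{lem:individual}. The gap is Step~3, and it cannot be closed, because the statement is false in exactly the setting of Definition~\ref{def:polyforest_greedoid}, even when $\mathcal{I}$ is a genuine greedoid.

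Take $\mathcal{V}=\mathcal{R}=\{a,b,c\}$ and $\mathcal{P}=\{a\to b,\,b\to c,\,c\to a\}$. Then $\mathcal{I}$ is the family of all edge sets of size at most two (the uniform matroid $U_{2,3}$), and every basis has cardinality $N-1=2$. Let $g\equiv 0$, so the oracle is pure completability. Set $f(\emptyset)=f(\{a\to b\})=0$, $f(\{b\to c\})=f(\{c\to a\})=1$, $f(\{b\to c,\,c\to a\})=2$, and $f=10$ on the two bases containing $a\to b$. This $f$ is normalized, monotone and supermodular in the sense of Definition~\ref{def:supermodular}, and the unique optimum is $\mathcal{S}^{\star}=\{b\to c,\,c\to a\}$. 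Starting from $\mathcal{S}_0=\emptyset\subseteq\mathcal{S}^{\star}$, the algorithm selects $a\to b$. The candidate set at $\mathcal{S}_1=\{a\to b\}$ is $\{b\to c,\,c\to a\}$, so $\mathcal{S}_1\cup(\mathcal{S}^{\star}\cap\mathcal{C})=\{a\to b,\,b\to c,\,c\to a\}$. This is the directed triangle, which is not in $\mathcal{I}$. The same failure occurs under either of the paper's indexing conventions for $\mathcal{C}_t$.

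Both of your routes break on this example.
\begin{itemize}
\item \textbf{Tops-are-roots route.} The only nontrivial tree of $\mathcal{S}_1$ has root top $a$, and the isolated vertex $c$ is also a root. Yet the projected cycle formed by $b\to c$ and $c\to a$ is only a path in $\mathcal{S}^{\star}$, because the connecting edge $a\to b$ lies in $\mathcal{S}_1$ but not in $\mathcal{S}^{\star}$. Moreover, each single addition is legal.
\item \textbf{When that implication does work.} It is valid only if roots have in-degree zero. Then every $E'$-edge enters an isolated non-root vertex of $\mathcal{S}_t$. Tracing a directed cycle backwards through a nontrivial tree would reach that tree's root top, which has in-degree zero, which is impossible. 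So the cycle would lie entirely in $E'\subseteq\mathcal{S}^{\star}$, a contradiction. But Definition~\ref{def:polyforest_greedoid} explicitly lets roots receive from roots, and the example uses exactly that.
\item \textbf{Oracle fallback.} $b\to c$ and $c\to a$ are certified by different completions, $\{a\to b,\,b\to c\}$ and $\{a\to b,\,c\to a\}$. The oracle tests one element at a time and carries no joint information.
\end{itemize}

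The conclusion does hold under the extra hypothesis $\mathcal{S}_t\subseteq\mathcal{S}^{\star}$: acyclicity and the in-degree bound are inherited from $\mathcal{S}^{\star}$, and your Step~2 supplies the root condition. Greedy iterates, however, need not stay inside $\mathcal{S}^{\star}$.

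The paper's own proof takes the abstract route you rightly distrusted. It fixes a minimal violating $A'\subseteq\mathcal{S}^{\star}\cap\mathcal{C}_t$ and claims that (G3) lets the particular $e\in A'$ be added to $\mathcal{S}_t\cup(A'\setminus\{e\})$. But (G3) only guarantees that \emph{some} element of the larger set can be added. The example above shows that no argument, graphic or abstract, can fill this hole.
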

\begin{proof}
Let $A := \mathcal{S}^{\star} \cap \mathcal{C}_t$. By Lemma~\ref{lem:individual}, for every $e \in A$ we have $\mathcal{S}_t \cup \{e\} \in \mathcal{I}$. Moreover, $A \subseteq \mathcal{S}^{\star}$ and $\mathcal{S}^{\star} \in \mathcal{I}$, hence all elements of $A$ coexist in the feasible set $\mathcal{S}^{\star}$.

Suppose, for contradiction, that $\mathcal{S}_t \cup A \notin \mathcal{I}$. Since $\mathcal{I}$ satisfies the greedoid accessibility property, there exists a nonempty subset $A' \subseteq A$ that is minimal with respect to inclusion such that $\mathcal{S}_t \cup A' \notin \mathcal{I}$. By minimality, for every $e \in A'$, we have $\mathcal{S}_t \cup (A' \setminus \{e\}) \in \mathcal{I}$.

Pick any $e \in A'$. Because $e \in A' \subseteq \mathcal{S}^{\star}$ and $\mathcal{S}^{\star} \in \mathcal{I}$, the greedoid exchange property guarantees that $e$ can be added to the feasible set $\mathcal{S}_t \cup (A' \setminus \{e\})$ without violating feasibility, a contradiction. Therefore, $\mathcal{S}_t \cup A \in \mathcal{I}$.
\end{proof}

Lemmas~\ref{lem:individual} and~\ref{lem:joint} guarantee that, conditioned on being observed, all elements of the optimal solution behave as admissible greedy alternatives. The remaining challenge is therefore not feasibility, but observability: we next show that, at each iteration, the candidate set contains at least one element of the optimal solution with quantifiable probability.

\begin{lem}[Non-emptiness of Optimal Augmentation Set]
\label{lem:nonempty}
Let $|\mathcal{P}| = m$ and let all bases of $(\mathcal{P}, \mathcal{I})$ have common cardinality $N-1$. At stage $t$,
\[
    \bigl\{e \in \mathcal{S}^{\star} \setminus \mathcal{S}_t
    \cap \mathcal{C}_t :
    \mathcal{S}_t \cup \{e\} \in \mathcal{I}\bigr\}
    \neq \emptyset
\]
with probability at least $P \geq 1 - |\mathcal{C}_t| \cdot \dfrac{N-1}{m - |\mathcal{C}_t \cup \mathcal{S}_t|}$.
\end{lem}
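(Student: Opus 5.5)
The plan is to reduce the claim to a complementary event and bound that event with a union bound over the elements of $\mathcal{C}_t$. By Lemma~\ref{lem:individual}, every $e\in\mathcal{S}^\star\cap\mathcal{C}_t$ automatically satisfies $\mathcal{S}_t\cup\{e\}\in\mathcal{I}$. So the displayed set is nonempty exactly when $(\mathcal{S}^\star\setminus\mathcal{S}_t)\cap\mathcal{C}_t\neq\emptyset$. The failure event is therefore
\[
E_t:=\bigl\{(\mathcal{S}^\star\setminus\mathcal{S}_t)\cap\mathcal{C}_t=\emptyset\bigr\},
\]
and it suffices to show $\Pr(E_t)\le |\mathcal{C}_t|\,(N-1)/(m-|\mathcal{C}_t\cup\mathcal{S}_t|)$.

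The probabilistic model has to be made explicit, since the statement leaves it implicit. I will treat the optimal basis $\mathcal{S}^\star$, conditioned on the history $\mathcal{S}_t$ and $\mathcal{C}_t$, as having its not-yet-selected elements spread uniformly over the elements not already accounted for. Concretely, each $e\in\mathcal{C}_t$ is assigned a probability of belonging or not belonging to $\mathcal{S}^\star\setminus\mathcal{S}_t$ by comparing the at most $N-1$ optimal elements against the pool of $m-|\mathcal{C}_t\cup\mathcal{S}_t|$ elements outside the observed candidates and current solution. This is an averaging or exchangeability assumption on where the optimum lies relative to the oracle's window.

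Under this model, I bound, for each fixed $e\in\mathcal{C}_t$, the probability that $e$ is a ``witness of failure.'' The cleanest route is to write $E_t$ as the event that all remaining optimal elements (at least one, since $\mathcal{S}_t$ is not yet a basis and $|\mathcal{S}^\star|=N-1$) fall outside $\mathcal{C}_t$. I then dominate it by a union over $e\in\mathcal{C}_t$ of events of the form ``the optimal element competing with $e$ lies in the unobserved pool.'' Using the common-cardinality assumption, each such event has probability at most $(N-1)/(m-|\mathcal{C}_t\cup\mathcal{S}_t|)$: there are at most $N-1$ optimal elements, and the denominator is the size of the pool into which they could have been placed. Summing over the $|\mathcal{C}_t|$ candidates gives the stated bound, and taking complements yields $P\ge 1-|\mathcal{C}_t|(N-1)/(m-|\mathcal{C}_t\cup\mathcal{S}_t|)$. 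When the right-hand side is nonpositive the claim is trivial, so assume $m>|\mathcal{C}_t\cup\mathcal{S}_t|$.

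The main obstacle is the middle step: justifying the per-candidate bound, and in particular its denominator, from a rigorous sampling model. A genuinely uniform hypergeometric computation would give a product form that is not obviously below the union bound. So I would first try to define the per-element events so that each is a single-draw event with the ratio $(N-1)/(m-|\mathcal{C}_t\cup\mathcal{S}_t|)$, and fall back on Boole's inequality to absorb the dependence between them. I also need the assumption that $\mathcal{S}_t$ is not a basis, which guarantees $\mathcal{S}^\star\setminus\mathcal{S}_t\neq\emptyset$; without it the nonemptiness claim is vacuous.
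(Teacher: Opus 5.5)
\textbf{The gap is at the step you flagged, and it cannot be closed.} Your route is the paper's route: reduce via Lemma~\ref{lem:individual} to $\mathcal{C}_t\cap(\mathcal{S}^\star\setminus\mathcal{S}_t)\neq\emptyset$, posit a uniform exposure model, pass to the complement, take a union bound over $\mathcal{C}_t$, and replace $|\mathcal{S}^\star\setminus\mathcal{S}_t|$ by $N-1$. The paper does not close the gap either; it asserts the inequality with the single phrase ``by complementary probability and a union bound.'' The union bound points the wrong way. Write $c:=|\mathcal{C}_t|$, $M:=m-|\mathcal{S}_t|$, $k:=|\mathcal{S}^\star\setminus\mathcal{S}_t|\le N-1$. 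Since $\mathcal{C}_t\cap\mathcal{S}_t=\emptyset$, the lemma's denominator is $M-c$; set $\rho:=c(N-1)/(M-c)$. The failure event $E_t=\bigcap_{e\in\mathcal{C}_t}\{e\notin\mathcal{S}^\star\}$ is an intersection of ``miss'' events. Your ratio ``at most $N-1$ optimal elements over the size of the pool'' is instead the probability that a given slot \emph{is} optimal, a ``hit'' event. Boole's inequality applied to hits bounds the \emph{success} probability from above. If each candidate lies in $\mathcal{S}^\star\setminus\mathcal{S}_t$ with probability at most $(N-1)/(M-c)$, as your model posits, then $\Pr[\text{success}]\le\rho$. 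Hence $\Pr(E_t)\ge 1-\rho$, which is the reverse of what you need. Any family of events covering $E_t$ must have total probability at least $\Pr(E_t)$. The natural ones, such as ``a fixed optimal element misses $\mathcal{C}_t$,'' each have probability $1-c/M$ under uniform placement, which is close to $1$.

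The claimed inequality is in fact false under the uniform model, in either version. This holds for random $\mathcal{C}_t$ with $\mathcal{S}^\star$ fixed, as in the paper. It also holds for $\mathcal{S}^\star\setminus\mathcal{S}_t$ a uniform $k$-subset of $\mathcal{P}\setminus\mathcal{S}_t$ with $\mathcal{C}_t$ fixed, the natural reading of yours. In both cases
\[
\Pr\bigl[\mathcal{C}_t\cap(\mathcal{S}^\star\setminus\mathcal{S}_t)\neq\emptyset\bigr]
=1-\frac{\binom{M-k}{c}}{\binom{M}{c}}\le\frac{ck}{M}<\rho .
\]
So whenever the lemma promises a success probability $1-\rho>1/2$, the true value is below $\rho<1/2$. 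For example, take $c=1$, $\mathcal{S}_t=\emptyset$, $N=20$, $m=100$. The success probability is exactly $19/100$, while the lemma asserts at least $80/99$. Your suspicion about the hypergeometric product was right: it is not merely ``not obviously'' below the bound, it exceeds it. The correct uniform-model estimate $1-\binom{M-k}{c}/\binom{M}{c}\ge 1-(1-k/M)^c$ is small unless $ck$ is comparable to $M$. No choice of per-candidate events repairs this.

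What the structure does give is a deterministic statement. If the oracle rejects nothing beyond greedoid membership, (G3) applied to $\mathcal{S}_t$ and $\mathcal{S}^\star$ (with $|\mathcal{S}_t|<N-1$) places some element of $\mathcal{S}^\star\setminus\mathcal{S}_t$ in $\mathcal{C}_t$. Once physical filtering can remove that element, a probabilistic statement needs a model that ties $\mathcal{C}_t$ to $\mathcal{S}^\star$, not uniform exposure.
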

\begin{proof}
By Lemma~\ref{lem:individual}, feasibility of any element $e \in \mathcal{C}_t$ is guaranteed. Thus it suffices to show that $\mathcal{C}_t \cap (\mathcal{S}^{\star} \setminus \mathcal{S}_t) \neq \emptyset$.

We analyze this event under a uniform exposure model, in which the elements visible to the oracle at iteration $t$ are treated as being sampled uniformly at random from $\mathcal{P} \setminus \mathcal{S}_t$. By complementary probability and a union bound, we obtain
\[
\Pr\!\left[
\mathcal{C}_t \cap (\mathcal{S}^{\star} \setminus \mathcal{S}_t)
\neq \emptyset
\right]
\;\geq\;
1 - |\mathcal{C}_t|
\cdot
\frac{|\mathcal{S}^{\star} \setminus \mathcal{S}_t|}
{|\mathcal{P}| - |\mathcal{C}_t \cup \mathcal{S}_t|}.
\]

Since all bases have common cardinality $N-1$, we have $|\mathcal{S}^{\star} \setminus \mathcal{S}_t| \le N-1$, and the stated bound follows.
\end{proof}

\begin{rem}[Induction from an Arbitrary Subset]
\label{rem:induction}
Greedy convergence proofs typically start from the empty set, using the trivial inclusion $\emptyset \subseteq \mathcal{S}^{\star}$ to ensure a valid inductive base without prior knowledge. The same argument extends to any $\mathcal{S}_0 \subseteq \mathcal{S}^{\star}$ with $|\mathcal{S}_0| \leq |\mathcal{S}^{\star}|$; although this may seem restrictive, in practice the feasibility oracle defining $\mathcal{C}$ often produces elements aligned with $\mathcal{S}^{\star}$ early on. Thus, empty initialization is a worst-case baseline, while domain-informed initializations (e.g., in radial reconfiguration) effectively shift the inductive base, reducing the number of greedy steps and tightening the optimality gap without affecting correctness, see Fig.~\ref{fig:greedy_divergence_corrected}.
\end{rem}

\begin{figure}[t]
    \centering
    \begin{tikzpicture}[scale=0.75, every node/.style={font=\small}]
    \tikzstyle{optpt}=[circle, fill=black, inner sep=1.7pt]
    \tikzstyle{grpt}=[circle, fill=blue!70!black, inner sep=1.7pt]
    \tikzstyle{optline}=[black!75, thick]
    \tikzstyle{grline}=[blue!70!black, very thick]
    \tikzstyle{guide}=[gray!40, dashed]
    \coordinate (O0) at (0,0);
    \coordinate (O1) at (2,0);
    \coordinate (O2) at (4,0);
    \coordinate (O3) at (6,0);
    \coordinate (O4) at (8,0);
    \coordinate (O5) at (10,0);
    \draw[optline] (O0) -- (O5);
    \foreach \P in {O0,O1,O2,O3,O4,O5}
        \node[optpt] at (\P) {};
    \node[below=6pt] at (O0) {$\mathcal{S}_0^\star=\emptyset$};
    \node[below=6pt] at (O1) {$\mathcal{S}_1^\star$};
    \node[below=1pt] at (O2) {$\mathcal{S}_2^\star$};
    \node[below=6pt] at (O3) {$\mathcal{S}_3^\star$};
    \node[below=6pt] at (O4) {$\cdots$};
    \node[below=6pt] at (O5) {$\mathcal{S}_\kappa^\star=\mathcal{S}^\star$};
    \coordinate (G0) at (0,0);
    \coordinate (G1) at (2,0);
    \coordinate (G2) at (4,-0.7);
    \coordinate (G3) at (6,-0.2);
    \coordinate (G4) at (8,0.4);
    \coordinate (G5) at (10,0.8);
    \draw[grline]
    (G0) -- (G1)
    .. controls (3.0,0.0) and (4.2,-0.9) .. (G2)
    .. controls (5.0,-0.8) and (5.8,-0.3) .. (G3)
    .. controls (6.6,0.0) and (7.4,0.3) .. (G4)
    .. controls (8.6,0.6) and (9.2,0.8) .. (G5);
    \foreach \P in {G0,G1,G2,G3,G4,G5}
        \node[grpt] at (\P) {};
    \node[above=6pt, blue!70!black] at (G0) {$\mathcal{S}_0$};
    \node[above=6pt, blue!70!black] at (G1) {$\mathcal{S}_1$};
    \node[below=2pt, blue!70!black] at (G2) {$\mathcal{S}_2$};
    \node[below=11pt, blue!70!black] at (G3) {$\mathcal{S}_3$};
    \node[above=6pt, blue!70!black] at (G4) {$\cdots$};
    \node[right=2pt, blue!70!black] at (G5) {$\mathcal{S}_\kappa$};
    \draw[guide] (6.5,0.6) -- (6.5,-0.6);
    \node[above=12pt, blue!70!black] at (8.4,0.65)
        {\scriptsize accumulated error};
    \draw[->, thick] (-0.4,1.2) -- (10.5,1.2);
    \node[above] at (5,1.2) {\scriptsize greedy iterations};
    \end{tikzpicture}
\caption{Greedy sub-optimality intuition. Supermodularity
bounds the accumulated error at each iteration with respect
to an optimal solution.}
\label{fig:greedy_divergence_corrected}
\end{figure}

We are now ready to state the main optimality guarantee.

\begin{thm}[Conditioned Sequential Greedy Optimality over a Graphic Greedoid]
\label{thm:gap}
Let $(\mathcal{P}, \mathcal{I})$ be a graphic greedoid whose bases all have common cardinality $N-1$. Let $f : \mathcal{I} \to \mathbb{R}_{\geq 0}$ be normalized, monotone non-decreasing, and supermodular over $(\mathcal{P}, \mathcal{I})$ (Definitions~\ref{def:monotone}--\ref{def:supermodular}). Assume $\mathcal{S}_0 \subseteq \mathcal{S}^{\star}$ (Remark~\ref{rem:induction}) and let $a = \min_{t \in \{1,\ldots,\kappa\}} |\mathcal{C}_t|$ and $b = \max_{t \in \{1,\ldots,\kappa\}} |\mathcal{C}_t|$ be the minimum and maximum oracle space dimensions over all algorithm iterations. Then Algorithm~\ref{alg:conditioned_greedy} finds $\mathcal{S}_{N-1} \in \mathcal{B}(\mathcal{I})$ satisfying
\begin{equation}\label{eq:gap}
    f(\mathcal{S}_{N-1}) \;\leq\; \frac{b}{a}
    \cdot f(\mathcal{S}^{\star})
\end{equation}
with probability at least
\begin{equation}\label{eq:prob}
    P \;\geq\;
    1 - \bigl(m - (N-2)\bigr)
    \cdot \Bigl(1 - \tfrac{m'}{m}\Bigr),
\end{equation}
where $m = |\mathcal{P}|$ and $m' = |\mathcal{P} \setminus \mathcal{U}|$.
\end{thm}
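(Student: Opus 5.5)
The plan is to follow the classical greedy telescoping argument for supermodular minimization, conditioned on the event that an element of $\mathcal{S}^\star$ is observed at every iteration. The probability of that event will be bounded separately by a union bound over iterations.

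\textbf{Deterministic part.} I would fix an iteration $t$ and work on the event $E_t$ that $\mathcal{C}_t\cap(\mathcal{S}^\star\setminus\mathcal{S}_{t-1})\neq\emptyset$.
\begin{itemize}
\item By Lemmas~\ref{lem:individual} and~\ref{lem:joint}, every $e^\star$ in this intersection is an admissible augmentation, and $\mathcal{S}_{t-1}\cup(\mathcal{S}^\star\cap\mathcal{C}_t)\in\mathcal{I}$.
\item The greedy rule (line 4) gives $\Delta(\bar s_t\mid\mathcal{S}_{t-1})\le\Delta(e^\star\mid\mathcal{S}_{t-1})$ for each such $e^\star$. Averaging over the observed optimal elements gives $\Delta(\bar s_t\mid\mathcal{S}_{t-1})\le\frac{1}{|\mathcal{S}^\star\cap\mathcal{C}_t|}\sum_{e\in\mathcal{S}^\star\cap\mathcal{C}_t}\Delta(e\mid\mathcal{S}_{t-1})$.
\item I would then compare $\Delta(e\mid\mathcal{S}_{t-1})$ with the marginal of $e$ along a chain inside $\mathcal{S}^\star$. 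The idea is to use supermodularity (Definition~\ref{def:supermodular}) in reverse: since $\mathcal{S}_0\subseteq\mathcal{S}^\star$ and $f$ is monotone, the greedy marginals at step $t$ should be dominated by marginals of optimal elements relative to a smaller prefix $\mathcal{S}^\star_{t-1}$, mirroring Fig.~\ref{fig:greedy_divergence_corrected}.
\end{itemize}
Summing the per-step inequalities over $t=1,\dots,\kappa$ and telescoping gives $f(\mathcal{S}_{N-1})-f(\mathcal{S}_0)\le\sum_t\frac{1}{|\mathcal{S}^\star\cap\mathcal{C}_t|}\sum_{e}\Delta(e\mid\cdot)$. The variable weights are then handled as follows. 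Each optimal element's marginal is counted at most once per iteration in which it is observed. Normalizing by candidate-set sizes, I would bound the resulting ratio of sums by $\max_t|\mathcal{C}_t|/\min_t|\mathcal{C}_t|=b/a$, which yields $f(\mathcal{S}_{N-1})\le\frac{b}{a}f(\mathcal{S}^\star)$ after using $f(\mathcal{S}_0)\le f(\mathcal{S}^\star)$ and normalization.

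\textbf{Probabilistic part.} The deterministic argument needs $E_t$ for every $t$. I would bound $\Pr[\bigcup_t E_t^c]$ by a union bound rather than by Lemma~\ref{lem:nonempty} directly.
\begin{itemize}
\item Under the uniform exposure model, the event that a given optimal element is unobserved has probability $|\mathcal{U}|/m=1-m'/m$.
\item Counting the number of (iteration, element) failure configurations that could break the chain gives at most $m-(N-2)$ positions. Namely, after the $N-2$ elements placed before the final step, $m-(N-2)$ elements of $\mathcal{P}$ remain that could be the missed optimal one.
\item Multiplying gives $1-(m-(N-2))(1-m'/m)$, which is \eqref{eq:prob}.
\end{itemize}

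\textbf{Main obstacle.} The hardest step is the per-iteration comparison that produces the ratio $b/a$. Standard supermodular minimization bounds compare greedy marginals with optimal marginals on nested sets. Here $\mathcal{S}_{t-1}\not\subseteq\mathcal{S}^\star$ in general, and supermodularity over $\mathcal{I}$ only applies when both augmented sets lie in $\mathcal{I}$. I would first secure this through Lemma~\ref{lem:joint}, applied to $\mathcal{S}_{t-1}\cup(\mathcal{S}^\star\cap\mathcal{C}_t)$, so that the required augmentations stay in the domain. I would then try to show that the average over the observed optimal elements, reweighted by $|\mathcal{C}_t|$, is within a factor $b/a$ of the uniform average over all of $\mathcal{S}^\star$. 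This reweighting is exactly where variability in the candidate-set sizes enters. If the direct comparison fails, the fallback is to bound each step crudely by $\frac{|\mathcal{C}_t|}{a}$ times the average optimal marginal, and then use $|\mathcal{C}_t|\le b$.
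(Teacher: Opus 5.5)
Your first two bullets match the paper's Step 1: the admissibility of observed optimal elements via Lemma~\ref{lem:joint}, and averaging the greedy marginal over $A_t=\mathcal{S}^\star\cap\mathcal{C}_t$. After that there is a genuine gap. The step that is supposed to produce $b/a$ is never supplied, and the comparison you propose for it runs against supermodularity.

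Definition~\ref{def:supermodular} gives $\Delta(e\mid A)\le\Delta(e\mid B)$ for $A\subseteq B$. So the marginal of $e$ relative to a smaller prefix $\mathcal{S}^\star_{t-1}$ is a \emph{lower} bound on its marginal relative to a larger set, not an upper bound. Bounding $\Delta(e\mid\mathcal{S}_{t-1})$ from above by $\Delta(e\mid\mathcal{S}^\star_{t-1})$ through supermodularity would require $\mathcal{S}_{t-1}\subseteq\mathcal{S}^\star_{t-1}$. For prefixes of equal size that forces $\mathcal{S}_{t-1}=\mathcal{S}^\star_{t-1}$.

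The additive telescoping you build on this also cannot give $b/a$, for two reasons. First, an optimal element may lie in $\mathcal{C}_t$ for many $t$, so $\sum_t\frac{1}{k_t}\sum_{e\in A_t}\Delta(e\mid\cdot)$ is not a single copy of $f(\mathcal{S}^\star)$. The classical matroid argument of this additive type needs a one-to-one charging of greedy steps to optimal elements, which is exactly what oracle filtering destroys. Second, the weights $1/k_t$ contain no $|\mathcal{C}_t|$ at all, so ``normalizing by candidate-set sizes'' has nothing to act on. Your fallback, bounding step $t$ by $\frac{|\mathcal{C}_t|}{a}$ times the average optimal marginal, is asserted rather than derived. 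It would again need the wrong-direction comparison.

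The missing idea is the paper's multiplicative recursion.
\begin{itemize}
\item It uses supermodularity in the direction it actually holds: individual marginals at the base are at most the chained marginals along $A_t$. This gives $\sum_{e\in A_t}\Delta(e\mid\mathcal{S}_{t-1})\le f(\mathcal{S}_{t-1}\cup A_t)-f(\mathcal{S}_{t-1})$, with Lemma~\ref{lem:joint} keeping $\mathcal{S}_{t-1}\cup A_t$ in the domain.
\item It writes the step as a convex combination $f(\mathcal{S}_t)\le(1-\alpha_t)f(\mathcal{S}_{t-1})+\alpha_t f(\mathcal{S}_{t-1}\cup A_t)$, with mixing weight $\alpha_t=k_t/|\mathcal{C}_t|$ placed in $[1/b,1/a]$.
\item It replaces $f(\mathcal{S}_{t-1}\cup A_t)$ by $f(\mathcal{S}^\star)$ via monotonicity.
\item It unrolls $f(\mathcal{S}_t)\le(1-\frac{1}{b})f(\mathcal{S}_{t-1})+\frac{1}{a}f(\mathcal{S}^\star)$.
\end{itemize}
Thus $|\mathcal{C}_t|$ enters only through $\alpha_t$. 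The factor $b/a$ is the geometric sum $\frac{1}{a}\sum_{j\ge 0}(1-\frac{1}{b})^j$, and the initial term is absorbed by $f(\mathcal{S}_0)\le f(\mathcal{S}^\star)$ and $b\ge a$.

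Be aware that the monotonicity step there simply asserts $\mathcal{S}_{t-1}\cup A_t\subseteq\mathcal{S}^\star$. That is the very containment you correctly flag as false in general, so following this route means inheriting that assertion.

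For the probability, the paper takes the per-iteration failure bound $1-m'/m$ from Lemma~\ref{lem:nonempty} and union-bounds over iterations. Your union over $m-(N-2)$ elements bounds the wrong events. $A_t=\emptyset$ at some iteration does not imply that any particular element is unobserved over the whole run, so the failure event is not contained in your union.
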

\begin{proof}
We proceed by induction over the greedy iterations. Let $A_t := \mathcal{S}^{\star} \cap \mathcal{C}_t$ and set $k_t := |A_t|$.

\textbf{Step 1 (Greedy selection bound).}
At iteration $t$, the greedy rule selects
\[
s_t \in \arg\min_{e \in \mathcal{C}_t}
\Delta(e \mid \mathcal{S}_{t-1}),
\]
where
$\Delta(e \mid \mathcal{S}_{t-1})
= f(\mathcal{S}_{t-1} \cup \{e\}) - f(\mathcal{S}_{t-1})$.
By definition of the minimum,
\[
\Delta(s_t \mid \mathcal{S}_{t-1})
\;\le\;
\frac{1}{k_t}
\sum_{e \in A_t}
\Delta(e \mid \mathcal{S}_{t-1}).
\]

\textbf{Step 2 (Supermodularity).} Since $\mathcal{S}_{t-1} \subseteq \mathcal{S}_{t-1} \cup A_t$, supermodularity implies
\[
\Delta(e \mid \mathcal{S}_{t-1})
\;\le\;
\Delta(e \mid \mathcal{S}_{t-1} \cup A_t)
\quad \forall e \in A_t.
\]
Summing over $A_t$ and using Lemma~\ref{lem:joint},
\[
\Delta(s_t \mid \mathcal{S}_{t-1})
\;\le\;
\frac{1}{k_t}
\bigl(
f(\mathcal{S}_{t-1} \cup A_t)
- f(\mathcal{S}_{t-1})
\bigr).
\]

\textbf{Step 3 (Recursion).}
Using $f(\mathcal{S}_t) = f(\mathcal{S}_{t-1}) + \Delta(s_t \mid \mathcal{S}_{t-1})$, we obtain
\[
f(\mathcal{S}_t)
\;\le\;
\Bigl(1 - \frac{k_t}{|\mathcal{C}_t|}\Bigr)
f(\mathcal{S}_{t-1})
\;+\;
\frac{k_t}{|\mathcal{C}_t|}
f(\mathcal{S}_{t-1} \cup A_t).
\]

\textbf{Step 4 (Oracle size bound).}
By definition, $a \le |\mathcal{C}_t| \le b$. Conditioned on $A_t \neq \emptyset$ (Lemma~\ref{lem:nonempty}), we have $k_t \ge 1$ and hence
\[
\frac{1}{b}
\;\le\;
\frac{k_t}{|\mathcal{C}_t|}
\;\le\;
\frac{1}{a}.
\]
Using monotonicity and $\mathcal{S}_{t-1} \cup A_t \subseteq \mathcal{S}^{\star}$, the recursion gives
\[
f(\mathcal{S}_t)
\;\le\;
\Bigl(1 - \frac{1}{b}\Bigr)
f(\mathcal{S}_{t-1})
+ \frac{1}{a} f(\mathcal{S}^{\star}).
\]

Unrolling this inequality for $t = 1,\dots,N-1$ and using $\mathcal{S}_0 \subseteq \mathcal{S}^{\star}$, we obtain
\[
f(\mathcal{S}_{N-1})
\;\le\;
\frac{b}{a}\,f(\mathcal{S}^{\star}).
\]

\textbf{Step 5 (Probabilistic correction).}
By Lemma~\ref{lem:nonempty}, at each iteration the event $A_t \neq \emptyset$ fails with probability at most $1 - m'/m$. Applying a union bound over the $N-2$ greedy iterations yields
\[
P
\;\ge\;
1 - \bigl(m - (N-2)\bigr)
\Bigl(1 - \frac{m'}{m}\Bigr),
\] which establishes~\eqref{eq:prob}.
\end{proof}

Theorem~\ref{thm:gap} reveals two independent axes along which Algorithm~\ref{alg:conditioned_greedy} may deviate from the global optimum. The ratio $b/a$ captures the variability of the oracle space across iterations: when the candidate set is uniform in size ($a = b$), the bound reduces to $f(\mathcal{S}_{N-1}) \leq f(\mathcal{S}^{\star})$, recovering exact optimality. The probabilistic factor~\eqref{eq:prob} captures the structural price of feasibility conditioning: when $m' = m$, meaning all elements are observed at some iteration, the guarantee becomes fully deterministic. In practice, denser and more connected networks expose a larger fraction of $\mathcal{P}$ to the oracle across iterations, pushing both $m' \to m$ and $a \to b$ simultaneously, so the two bounds tighten together as the network becomes richer.

\begin{figure*}[t]
\centering
    \begin{subfigure}[b]{0.2\textwidth}
        \centering
        \begin{tikzpicture}[>=stealth',shorten >=1pt,auto,node distance=1.5cm,
      thick,main node/.style={circle,fill=black,draw,minimum size=5pt,inner sep=0pt}]
          \node[main node, text=white, minimum size=8pt] (x1) at (0,1.5) {1};
          \node[main node, fill=none, minimum size=12pt] (x1111) at (0,1.5) {};
          \node[main node, text=white, minimum size=8pt] (x2) at (0,-1.5) {2};
          \node[main node, fill=none, minimum size=12pt] (x2222) at (0,-1.5) {};
          \node[main node, fill=white, minimum size=10pt] (x3) at (-1.5,0.5) {3};
          \node[main node, fill=white, minimum size=10pt] (x4) at (-1.5,-0.5) {4};
          \node[main node, fill=white, minimum size=10pt] (x5) at (0,0) {5};
          \node[main node, fill=white, minimum size=10pt] (x6) at (1.5,0) {6};
          \path[every node/.style={font=\sffamily\small}]
            (x1111) edge[bend left,->,gray,dashed] (x5)
            (x5) edge[bend left,->,gray,dashed] (x1111)
            (x2222) edge[bend left,->,gray,dashed] (x5)
            (x5) edge[bend left,->,gray,dashed] (x2222)
            (x1111) edge[bend left,->,gray,dashed] (x6)
            (x6) edge[bend left,->,gray,dashed] (x1111)
            (x2222) edge[bend left,->,gray,dashed] (x6)
            (x6) edge[bend left,->,gray,dashed] (x2222)
            (x5) edge[bend left,->,gray,dashed] (x6)
            (x6) edge[bend left,->,gray,dashed] (x5)
            (x2222) edge[bend left,->,gray,dashed] (x4)
            (x4) edge[bend left,->,gray,dashed] (x2222)
            (x4) edge[bend left,->,gray,dashed] (x5)
            (x5) edge[bend left,->,gray,dashed] (x4)
            (x4) edge[bend left,->,gray,dashed] (x3)
            (x3) edge[bend left,->,gray,dashed] (x4)
            (x5) edge[bend left,->,gray,dashed] (x3)
            (x3) edge[bend left,->,gray,dashed] (x5);
        \end{tikzpicture}
        \caption{$\mathcal{C}_0=\{\}$,
                 $\mathcal{S}_0=\{\}$.}
        \label{fig:observe1}
    \end{subfigure}
    \hspace{3mm}
    \begin{subfigure}[b]{0.2\textwidth}
        \centering
        \begin{tikzpicture}[>=stealth',shorten >=1pt,auto,node distance=1.5cm,
      thick,main node/.style={circle,fill=black,draw,minimum size=5pt,inner sep=0pt}]
          \node[main node, text=white, minimum size=8pt] (x1) at (0,1.5) {1};
          \node[main node, fill=none, minimum size=12pt] (x1111) at (0,1.5) {};
          \node[main node, text=white, minimum size=8pt] (x2) at (0,-1.5) {2};
          \node[main node, fill=none, minimum size=12pt] (x2222) at (0,-1.5) {};
          \node[main node, fill=white, minimum size=10pt] (x3) at (-1.5,0.5) {3};
          \node[main node, fill=white, minimum size=10pt] (x4) at (-1.5,-0.5) {4};
          \node[main node, fill=white, minimum size=10pt] (x5) at (0,0) {5};
          \node[main node, fill=white, minimum size=10pt] (x6) at (1.5,0) {6};
          \path[every node/.style={font=\sffamily\small}]
            (x1111) edge[bend left,->,gray] (x5)
            (x5) edge[bend left,->,gray,dashed] (x1111)
            (x2222) edge[bend left,->,gray] (x5)
            (x5) edge[bend left,->,gray,dashed] (x2222)
            (x1111) edge[bend left,->,gray] (x6)
            (x6) edge[bend left,->,gray,dashed] (x1111)
            (x2222) edge[bend left,->,gray] (x6)
            (x6) edge[bend left,->,gray,dashed] (x2222)
            (x5) edge[bend left,->,gray,dashed] (x6)
            (x6) edge[bend left,->,gray,dashed] (x5)
            (x2222) edge[bend left,->,gray] (x4)
            (x4) edge[bend left,->,gray,dashed] (x2222)
            (x4) edge[bend left,->,gray,dashed] (x5)
            (x5) edge[bend left,->,gray,dashed] (x4)
            (x4) edge[bend left,->,gray,dashed] (x3)
            (x3) edge[bend left,->,gray,dashed] (x4)
            (x5) edge[bend left,->,gray,dashed] (x3)
            (x3) edge[bend left,->,gray,dashed] (x5);
        \end{tikzpicture}
        \caption{$\mathcal{C}_1=\{(1\to6),(1\to5),(2\to6),
                 (2\to5),(2\to4)\}$,
                 $\mathcal{S}_0=\{\}$.}
        \label{fig:observe2}
    \end{subfigure}
    \hspace{3mm}
    \begin{subfigure}[b]{0.2\textwidth}
        \centering
        \begin{tikzpicture}[>=stealth',shorten >=1pt,auto,node distance=1.5cm,
      thick,main node/.style={circle,fill=black,draw,minimum size=5pt,inner sep=0pt}]
          \node[main node, text=white, minimum size=8pt] (x1) at (0,1.5) {1};
          \node[main node, fill=none, minimum size=12pt] (x1111) at (0,1.5) {};
          \node[main node, text=white, minimum size=8pt] (x2) at (0,-1.5) {2};
          \node[main node, fill=none, minimum size=12pt] (x2222) at (0,-1.5) {};
          \node[main node, fill=white, minimum size=10pt] (x3) at (-1.5,0.5) {3};
          \node[main node, fill=white, minimum size=10pt] (x4) at (-1.5,-0.5) {4};
          \node[main node, fill=white, minimum size=10pt] (x5) at (0,0) {5};
          \node[main node, fill=white, minimum size=10pt] (x6) at (1.5,0) {6};
          \path[every node/.style={font=\sffamily\small}]
            (x1111) edge[bend left,->,gray] (x5)
            (x5) edge[bend left,->,gray,dashed] (x1111)
            (x2222) edge[bend left,->,gray] (x5)
            (x5) edge[bend left,->,gray,dashed] (x2222)
            (x1111) edge[bend left,->,blue!70!black] (x6)
            (x6) edge[bend left,->,gray,dashed] (x1111)
            (x2222) edge[bend left,->,gray] (x6)
            (x6) edge[bend left,->,gray] (x2222)
            (x5) edge[bend left,->,gray,dashed] (x6)
            (x6) edge[bend left,->,gray] (x5)
            (x2222) edge[bend left,->,gray] (x4)
            (x4) edge[bend left,->,gray,dashed] (x2222)
            (x4) edge[bend left,->,gray,dashed] (x5)
            (x5) edge[bend left,->,gray,dashed] (x4)
            (x4) edge[bend left,->,gray,dashed] (x3)
            (x3) edge[bend left,->,gray,dashed] (x4)
            (x5) edge[bend left,->,gray,dashed] (x3)
            (x3) edge[bend left,->,gray,dashed] (x5);
        \end{tikzpicture}
        \caption{$\mathcal{C}_2=\{(1\to5),(2\to6),(2\to5),
                 (2\to4),(6\to2),(6\to5)\}$,
                 $\mathcal{S}_1=\{(1\to6)\}$.}
        \label{fig:observe3}
    \end{subfigure}
    \hspace{3mm}
    \begin{subfigure}[b]{0.2\textwidth}
        \centering
        \begin{tikzpicture}[>=stealth',shorten >=1pt,auto,node distance=1.5cm,
      thick,main node/.style={circle,fill=black,draw,minimum size=5pt,inner sep=0pt}]
          \node[main node, text=white, minimum size=8pt] (x1) at (0,1.5) {1};
          \node[main node, fill=none, minimum size=12pt] (x1111) at (0,1.5) {};
          \node[main node, text=white, minimum size=8pt] (x2) at (0,-1.5) {2};
          \node[main node, fill=none, minimum size=12pt] (x2222) at (0,-1.5) {};
          \node[main node, fill=white, minimum size=10pt] (x3) at (-1.5,0.5) {3};
          \node[main node, fill=white, minimum size=10pt] (x4) at (-1.5,-0.5) {4};
          \node[main node, fill=white, minimum size=10pt] (x5) at (0,0) {5};
          \node[main node, fill=white, minimum size=10pt] (x6) at (1.5,0) {6};
          \path[every node/.style={font=\sffamily\small}]
            (x1111) edge[bend left,->,blue!70!black] (x5)
            (x5) edge[bend left,->,gray,dashed] (x1111)
            (x2222) edge[bend left,->,gray] (x5)
            (x5) edge[bend left,->,gray,dashed] (x2222)
            (x1111) edge[bend left,->,blue!70!black] (x6)
            (x6) edge[bend left,->,gray,dashed] (x1111)
            (x2222) edge[bend left,->,blue!70!black] (x6)
            (x6) edge[bend left,->,gray] (x2222)
            (x5) edge[bend left,->,gray,dashed] (x6)
            (x6) edge[bend left,->,gray] (x5)
            (x2222) edge[bend left,->,blue!70!black] (x4)
            (x4) edge[bend left,->,gray,dashed] (x2222)
            (x4) edge[bend left,->,gray] (x5)
            (x5) edge[bend left,->,gray,dashed] (x4)
            (x4) edge[bend left,->,blue!70!black] (x3)
            (x3) edge[bend left,->,gray,dashed] (x4)
            (x5) edge[bend left,->,gray,dashed] (x3)
            (x3) edge[bend left,->,gray] (x5);
        \end{tikzpicture}
        \caption{$\mathcal{C}_5=\{(6\to2),(6\to5),(4\to5),
                 (2\to5),(3\to5)\}$,
                 $\mathcal{S}_5=\{(1\to6),(1\to5),(2\to6),
                 (2\to4),(4\to3)\}$.}
        \label{fig:observe4}
    \end{subfigure}
\caption{Observability in \texttt{FORWARD}.
\textcolor{gray}{$\dashrightarrow$} unobserved edges;
\textcolor{gray}{$\rightarrow$} observed edges;
\textcolor{blue!70!black}{$\rightarrow$} selected edges.
The observed set grows at each iteration but never covers
$\mathcal{P}$ fully due to the oracle's feasibility
restrictions.}
\label{fig:observability}
\end{figure*}


\section{Case Study: \texttt{FORWARD} as a Canonical Instance}
\label{sec:forward}

We now instantiate the abstract framework for the multi-source radial reconfiguration problem and show that \texttt{FORWARD} Algorithm, introduced in~\cite{JV-RB-SK:25:ACC} and later extended in~\cite{JV-RB-SK:25arxiv}, is a concrete realization of Algorithm~\ref{alg:conditioned_greedy}.

\subsection{Problem Formulation}
Consider a distribution network $\mathcal{G}_D = (\mathcal{V}_D, \mathcal{E}_D)$ with $N = |\mathcal{V}_D|$ nodes, $m = |\mathcal{E}_D|$ edges, $n_g$ source nodes $\mathcal{V}_g$, and $n_c$ sink nodes $\mathcal{V}_c$ with $\mathcal{V}_g\cup\mathcal{V}_c= \mathcal{V}_D$ and $\mathcal{V}_g\cap\mathcal{V}_c=\emptyset$. The Optimal Radial Reconfiguration Problem is:
\begin{subequations}\label{eq:network}
\begin{align}
    \min_{\mathcal{S},\,\mathbf{x}} \quad
    & f(\mathcal{S}, \mathbf{x})
    = \sum_{(i,j) \in \mathcal{S}}
    C_{i,j} \cdot x_{i,j}^2 \\
    \text{s.t.} \quad
    & \mathcal{G}(\mathcal{V}_D, \mathcal{S})
    \in \mathcal{Q}, \\
    & A(\mathcal{S})\,\mathbf{x}
    = \mathbf{g} - \mathbf{d}, \\
    & \mathbf{0} \leq \mathbf{x} \leq \bar{\mathbf{x}},
    \label{eqn:capacity_constraint}
\end{align}
\end{subequations}
where $\mathcal{Q}$ is the set of all spanning radial polyforests rooted at $\mathcal{V}_g$, which is a problem captured by Problem~\eqref{eq:abstract}, see Remark~\ref{rem:instantiation_problem4}.

\begin{rem}
  \label{rem:instantiation_problem4}
  Problem~\eqref{eq:network} is an instance of Problem~\eqref{eq:abstract} under the identification:
    \begin{itemize}
        \item $\mathcal{P} = \mathcal{E}_D$;
        \item $\mathcal{B}(\mathcal{I}) = \mathcal{Q}$, forming an $\mathcal{R}$-rooted spanning forest greedoid in the sense of Definition~\ref{def:greedoid} with root set $\mathcal{R} = \mathcal{V}_g$: the feasible collection $\mathcal{I}$ consists of all acyclic subsets of $\mathcal{E}_D$ in which every node has in-degree at most one and every connected component contains at least one node from $\mathcal{V}_g$, and the bases $\mathcal{B}(\mathcal{I}) = \mathcal{Q}$ are precisely the spanning radial polyforests rooted at $\mathcal{V}_g$;
        \item $g\!\left(\mathbf{x}(\mathcal{S})\right):= A(\mathcal{S})\mathbf{x}(\mathcal{S}) - (\mathbf{g}-\mathbf{d}) = \mathbf{0}$, uniquely resolved by Kirchhoff's current law on the tree~\cite[Lemma 4]{JV-RB-SK:25arxiv};
       \item $f(\mathcal{S})$ is monotone non-decreasing and supermodular over $(\mathcal{E}_D, \mathcal{I})$\footnote{Detailed validation is given in the appendix.}.
    \end{itemize}
\end{rem}

\subsection{The \texttt{FORWARD} Algorithm}

\texttt{FORWARD}~\cite{JV-RB-SK:25arxiv} is a polynomial-time algorithm that builds a spanning radial polyforest edge by edge while certifying structural and physical feasibility at every step. It operates on the $\mathcal{R}$-rooted spanning forest greedoid $(\mathcal{E}_D, \mathcal{I})$ of Definition~\ref{def:greedoid} with root set $\mathcal{R} = \mathcal{V}_g$ and is driven by five components:
\begin{itemize}
    \item \textsf{Pre-Processor}: Iteratively processes pendant nodes, directing mandatory edges and redistributing their nodal values to parent nodes, returning the 2-core subgraph $\mathcal{G}_P$ and updated source set $\mathcal{V}_g$ as outputs. This eliminates trivial components that must appear in any feasible solution before greedy expansion begins~\cite[Algorithm~3]{JV-RB-SK:25arxiv}.
    \item \textsf{Islander}: Partitions $\mathcal{G}_P$ at articulation \emph{source} nodes into disjoint balanced subgraphs $\mathcal{G}^\ell$, enabling parallel processing while preserving both feasibility and optimality~\cite[Theorem~5, Corollary~6]{JV-RB-SK:25arxiv}.
    \item \textsf{Net-Concad} (Network Condensation): At each iteration $t$, condenses previously sampled polytrees into super source nodes and unsampled sink components into super sink nodes, forming a quasi-bipartite dual graph $\bar{\mathcal{G}}^\ell$. The edges of $\bar{\mathcal{G}}^\ell$ define the current candidate set $\mathcal{C}_t$, enforcing greedoid membership $\bar{\mathcal{S}} \cup \{e\} \in \mathcal{I}$ and certifying that each candidate admits a completion to a spanning polyforest rooted at $\mathcal{V}_g$, thereby implementing $\mathcal{F}(\bar{\mathcal{S}} \cup \{e\}) = 1$. This mechanism addresses the shortsightedness of greedy selection by providing a global view of remaining demand.
    \item \textsf{Sampler}: Selects from $\mathcal{C}_t$ using a priority queue sorted by physically-motivated weights $w_{i,j}$ defined by edge resistance and rolling demand estimates~\cite[Algorithm~6, Eq.~(2)]{JV-RB-SK:25arxiv}, prioritizing pendant super unsampled nodes and edges with sufficient capacity. This implements the greedy selection of Algorithm~\ref{alg:conditioned_greedy} through a physics-aware surrogate for marginal cost.
    \item \textsf{Rewire}: When active capacity constraints~\eqref{eqn:capacity_constraint} produce unsupplied nodes, performs strategic edge swaps to redistribute flow from surplus source-to-leaf paths to deficit regions, restoring feasibility while maintaining radiality~\cite[Lemma~10]{JV-RB-SK:25arxiv}.
\end{itemize}
Together, \textsf{Pre-Processor}, \textsf{Islander}, and \textsf{Net-Concad} constitute a computationally efficient implementation of the look-ahead physical feasibility oracle $\mathcal{F}$ of Definition~\ref{def:oracle}, while \textsf{Sampler} implements the greedy selection step of Algorithm~\ref{alg:conditioned_greedy}.

A key theoretical property of \texttt{FORWARD} is that it comes with a \emph{certified feasible final solution}. Specifically, by~\cite[Theorem~9]{JV-RB-SK:25arxiv}, \texttt{FORWARD} without capacity constraints is guaranteed to terminate in a feasible spanning radial polyforest $\mathcal{S} \in \mathcal{Q}$ satisfying $A(\mathcal{S})\mathbf{x}(\mathcal{S}) = \mathbf{g} - \mathbf{d}$. When capacity constraints are active, the \textsf{Rewire} component resolves any residual infeasibilities through strategic flow redistribution, and by~\cite[Theorem~11]{JV-RB-SK:25arxiv}, the complete \texttt{FORWARD} algorithm is guaranteed to return a feasible solution under Assumption~1 of~\cite{JV-RB-SK:25arxiv} (non-empty feasible set). This distinguishes \texttt{FORWARD} from heuristics that may terminate in infeasible configurations, and is precisely what makes it a well-posed canonical instance of Algorithm~\ref{alg:conditioned_greedy}.

\subsection{Mapping to the Abstract Framework}
Now let us demonstrate \texttt{FORWARD} is essentially a Conditioned Sequential Greedy Algorithm when \eqref{eqn:capacity_constraint} is not activated.

\begin{rem}[\texttt{FORWARD} as a Conditioned Sequential Greedy Instance]
\label{rem:instance}
    The feasibility oracle $\mathcal{F}$ is implemented jointly by \textsf{Pre-Processor}, \textsf{Islander}, and \textsf{Net-Concad}; the greedy selection is implemented by \textsf{Sampler}; and if capacity constraint~\eqref{eqn:capacity_constraint} is inactive, \textsf{Rewire} is not applied, maintaining a sequential greedy implementation. \textsf{Rewire}, when active, is equivalent to a Local Greedy Search~\cite{AK-GY-SB-EV-MC-TL-EP:18}, see Lemma~\ref{lem:rewire_equivalence}.
\end{rem}

\begin{rem}[Oracle Simplification in \texttt{FORWARD}]
\label{rem:oracle_simplification}
For every spanning radial polyforest $\mathcal{S} \in \mathcal{Q}$, flow conservation $g(\mathbf{x}(\mathcal{S})) = 0$ holds automatically by Kirchhoff's law~\cite{JV-RB-SK:25arxiv}. Therefore, $\mathcal{F}(\mathcal{S} \cup \{e\}) = 1$ reduces to a pure greedoid completability check, making \textsf{Pre-Processor}, \textsf{Islander}, and \textsf{Net-Concad} a computationally efficient implementation of Definition~\ref{def:oracle}.
\end{rem}

\begin{lem}\longthmtitle{\textsf{Rewire} Equivalence to Local Greedy Search}
    \label{lem:rewire_equivalence}
    The \textsf{Rewire} sub-routine is equivalent to performing a Local Greedy Search such as \cite[Algorithm 1]{AK-GY-SB-EV-MC-TL-EP:18}.
\end{lem}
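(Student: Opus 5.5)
We argue directly from the definitions, since the statement is essentially structural: we match each operation performed by \textsf{Rewire} with a move of the Local Greedy Search of~\cite[Algorithm~1]{AK-GY-SB-EV-MC-TL-EP:18}. Recall that the latter starts from a feasible spanning tree (basis) and repeatedly performs \emph{branch exchanges}: it removes an edge $e\in\mathcal{S}$ and inserts an edge $e'\notin\mathcal{S}$ such that $(\mathcal{S}\setminus\{e\})\cup\{e'\}$ is again a basis. Among such exchanges it applies one that decreases the cost, and it stops at a local optimum.

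\textbf{Step 1: \textsf{Rewire} starts from a basis.} When \textsf{Rewire} is invoked, the current configuration is a spanning radial polyforest $\mathcal{S}\in\mathcal{Q}=\mathcal{B}(\mathcal{I})$. This is guaranteed because \texttt{FORWARD} without capacity constraints terminates in $\mathcal{Q}$ by~\cite[Theorem~9]{JV-RB-SK:25arxiv}. By Remark~\ref{rem:oracle_simplification}, $g(\mathbf{x}(\mathcal{S}))=0$ holds automatically on $\mathcal{S}$. So the initial condition of Local Greedy Search (a feasible basis) is met. The only defect is possible violation of~\eqref{eqn:capacity_constraint}.

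\textbf{Step 2: each swap is a branch exchange.} By~\cite[Lemma~10]{JV-RB-SK:25arxiv}, a \textsf{Rewire} swap removes one edge $(i,j)$ on a surplus source-to-leaf path and adds one edge $(k,j')$ that reconnects the detached subtree to a deficit region, while preserving radiality. We will show that the result lies in $\mathcal{I}$ and is maximal, i.e.\ it is a basis:
\begin{itemize}
\item in-degree stays at most one, because the reattached node loses its previous parent edge;
\item acyclicity holds, because the new edge joins two distinct trees of the forest obtained after the removal;
\item every component contains a root.
\end{itemize}
Cardinality stays $N-1$, so the move is exactly a one-for-one exchange within $\mathcal{B}(\mathcal{I})$, the move set of~\cite{AK-GY-SB-EV-MC-TL-EP:18} transplanted from the graphic matroid to the rooted-forest greedoid.

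\textbf{Step 3: swap selection is greedy.} We show that the choice of swap coincides with a greedy (improving) choice with respect to an objective. The natural objective is the capacity violation, followed by $f$ once feasibility is restored. The swap is chosen by the \textsf{Sampler} weights $w_{i,j}$, which act as a surrogate marginal cost, and it terminates when no swap reduces infeasibility. This matches the stopping rule of Local Greedy Search.

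This third step is the main obstacle. \textsf{Rewire} is driven by feasibility restoration and the physics-weighted priority, not literally by $\Delta f$. So "equivalence" must be interpreted as equivalence of the search dynamics, namely identical neighborhoods and monotone improvement of a potential, rather than identical objectives. I would first attempt to define a lexicographic potential (total overload, then $f$) and show that each \textsf{Rewire} swap strictly decreases it. If that fails, I would weaken the claim to "\textsf{Rewire} is a Local Greedy Search with a surrogate cost," which still suffices for Remark~\ref{rem:instance}.
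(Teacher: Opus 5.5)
Your proposal has a genuine gap, and it sits in Step~3, which is where the lemma's content lies. Steps~1 and~2 only argue that \textsf{Rewire} moves within the same one-for-one exchange neighborhood, which the paper simply takes as given. What the lemma needs is that \textsf{Rewire}'s rule for accepting a swap coincides with the Local Greedy Search rule $f((\mathcal{S}\setminus\{e\})\cup\{\bar e\})<(1-\epsilon)f(\mathcal{S})$, and you leave this open. As written, Step~3 rests on claims that nothing in the paper supports:
\begin{itemize}
\item that \textsf{Rewire} chooses swaps via the \textsf{Sampler} weights $w_{i,j}$;
\item that it stops exactly when no swap reduces overload;
\item that your lexicographic potential decreases, which you propose but never show.
\end{itemize}
Your fallback, ``Local Greedy Search with a surrogate cost,'' is a weaker statement than the lemma. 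The missing idea is the paper's extended-value convention: a configuration violating~\eqref{eqn:capacity_constraint} is assigned $f(\mathcal{S})=\infty$. The configuration handed to \textsf{Rewire} is infeasible, so it has infinite cost. Any swap $e\mapsto\bar e$ that yields a feasible configuration therefore passes the LGS acceptance test automatically. \textsf{Rewire}'s feasibility-restoring criterion is then literally the LGS improvement criterion for this cost, and the objective mismatch you worry about disappears without any surrogate.

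Your lexicographic idea is still worth keeping. Under the $\infty$ convention, a swap between two infeasible configurations compares $\infty$ with $\infty$ and is not an improving move. So if \textsf{Rewire} needs several swaps to restore feasibility, a refinement such as overload first, then $f$, is what makes the intermediate moves count as improvements. You would then have to prove that every \textsf{Rewire} swap strictly reduces total overload.

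There are two secondary problems in Steps~1 and~2. In Step~2, your in-degree bullet holds only when the head $j'$ of the added edge is the node $j$ whose parent edge was removed, or a node that currently has no parent. Since $\mathcal{P}$ consists of directed edges, reattaching at any other $j'$ gives $j'$ two parents unless the path between $j$ and $j'$ is reversed, and that is not a one-for-one exchange. In Step~1, \cite[Theorem~9]{JV-RB-SK:25arxiv} concerns the capacity-free run. It therefore does not by itself certify that the configuration reaching \textsf{Rewire}, which exists only because capacities are active, lies in $\mathcal{Q}$.
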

\begin{proof}
    Given a configuration $\mathcal{S}$, \textsf{Rewire} swaps sampled edges $e\in\mathcal{S}$ with non-sampled edges $\bar{e}\in\mathcal{E}_D\setminus\mathcal{S}$ in order to achieve an improved solution $\mathcal{S}'$. By the function definition in Section~\ref{sec:preliminaries}, since the initial configuration is infeasible, $f(\mathcal{S})=\infty$. Hence, the \textsf{Rewire} operation is equivalent to modifying the current solution $\mathcal{S}$ if $f((\mathcal{S}\setminus\{e\})\cup\{\bar{e}\})<(1-\epsilon)f(\mathcal{S})$, as in the Local Greedy Search of~\cite{AK-GY-SB-EV-MC-TL-EP:18}, concluding the proof.
\end{proof}

After establishing \texttt{FORWARD} as a canonical instance of Algorithm~\ref{alg:conditioned_greedy}, we apply Theorem~\ref{thm:gap} to Problem~\eqref{eq:network}.

\begin{cor}[Optimality Gap of \texttt{FORWARD}]
\label{cor:forward}
When capacity constraints are inactive, \texttt{FORWARD} finds $\mathcal{S}_{N-1} \in \mathcal{Q}$ satisfying
$$
    f(\mathcal{S}_{N-1}) \leq \frac{b}{a} \cdot
    f(\mathcal{S}^{\star})
$$
with probability at least $P \geq 1 - (m-(N-2))\cdot(1-m'/m)$. When capacity constraints are active, \textsf{Rewire} is equivalent to a Local Greedy Search and its guarantee follows from the corresponding local search approximation result~\cite{AK-GY-SB-EV-MC-TL-EP:18}.
\end{cor}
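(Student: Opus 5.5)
The corollary is a direct specialization of Theorem~\ref{thm:gap}. The plan is to check its hypotheses for Problem~\eqref{eq:network} under the identification in Remark~\ref{rem:instantiation_problem4}, and then treat the capacity-active case separately through Lemma~\ref{lem:rewire_equivalence}.

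First, the structural setting. Take $\mathcal{P}=\mathcal{E}_D$, so $m=|\mathcal{E}_D|$, and take $\mathcal{I}$ to be the $\mathcal{R}$-rooted spanning forest greedoid of Definition~\ref{def:polyforest_greedoid} with $\mathcal{R}=\mathcal{V}_g$. Its bases are exactly $\mathcal{Q}$, and they all have common cardinality $N-1$, as stated after Remark~\ref{rem:importance_greedoid}. This supplies the graphic-greedoid and equal-cardinality hypotheses of Theorem~\ref{thm:gap}.

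Second, the objective. Remark~\ref{rem:instantiation_problem4} gives that $f(\mathcal{S})=\sum C_{i,j}x_{i,j}^2$ is monotone non-decreasing and supermodular over $\mathcal{I}$. Normalization is immediate: $\mathcal{S}=\emptyset$ activates no edges, so $f(\emptyset)=0$. Nonnegativity follows from $C_{i,j}\ge 0$ and the squared flows.

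Third, the algorithm. With capacities inactive, Remark~\ref{rem:instance} identifies \texttt{FORWARD} with Algorithm~\ref{alg:conditioned_greedy}: \textsf{Pre-Processor}, \textsf{Islander} and \textsf{Net-Concad} implement the oracle $\mathcal{F}$, and \textsf{Sampler} implements the argmin step. Remark~\ref{rem:oracle_simplification} shows that $\mathcal{F}$ satisfies Definition~\ref{def:oracle}, because Kirchhoff's law makes every $\mathcal{S}\in\mathcal{Q}$ physically feasible. The mandatory edges fixed by \textsf{Pre-Processor} lie in every feasible basis, hence in $\mathcal{S}^\star$; they therefore serve as the initialization $\mathcal{S}_0\subseteq\mathcal{S}^\star$ allowed by Remark~\ref{rem:induction}. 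Termination in $\mathcal{Q}$ comes from \cite[Theorem~9]{JV-RB-SK:25arxiv}, or equivalently from the termination argument following Definition~\ref{def:oracle}.

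With all hypotheses in place, I invoke Theorem~\ref{thm:gap} with $a,b$ taken as the extreme sizes of the \textsf{Net-Concad} candidate sets over the run. This yields the bound \eqref{eq:gap} with the probability \eqref{eq:prob}.

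For the capacity-active case, Lemma~\ref{lem:rewire_equivalence} identifies \textsf{Rewire} with the Local Greedy Search of \cite{AK-GY-SB-EV-MC-TL-EP:18}, and its approximation guarantee transfers directly. Feasibility of the output is guaranteed by \cite[Theorem~11]{JV-RB-SK:25arxiv}.

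\textbf{Main obstacle.} The delicate step is showing that \textsf{Sampler} really performs the exact argmin of $\Delta(\cdot\mid\bar{\mathcal{S}})$ over $\mathcal{C}_t$. It actually ranks candidates by the surrogate weights $w_{i,j}$, so a naive application of Step~1 of Theorem~\ref{thm:gap} is not justified. I would first try to show that the priority order induced by $w_{i,j}$ coincides with the order of marginal costs under the rolling demand estimate, so that the minimizing element is the same. If that fails, I would state the corollary for the idealized \textsf{Sampler} that uses true marginal costs, which is the reading intended by Remark~\ref{rem:instance}.
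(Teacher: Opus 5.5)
Your proposal follows the paper's route: the paper proves the corollary in one line from Theorem~\ref{thm:gap}, Remark~\ref{rem:instance}, and Lemma~\ref{lem:rewire_equivalence}, and your checks (common basis size $N-1$, normalization, the \textsf{Pre-Processor} edges as an initialization $\mathcal{S}_0\subseteq\mathcal{S}^\star$ in the sense of Remark~\ref{rem:induction}, oracle validity via Remark~\ref{rem:oracle_simplification}) simply spell out that line. The \textsf{Sampler} surrogate-weight concern you raise is legitimate, but the paper does not resolve it either---it asserts it away in Remark~\ref{rem:instance}, which you also invoke---so your argument is as complete as the paper's; note, though, that your fallback to an idealized \textsf{Sampler} would prove the bound for a different algorithm rather than for \texttt{FORWARD} itself.
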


\begin{proof}
Follows directly from Theorem~\ref{thm:gap}, Remark~\ref{rem:instance}, and Lemma~\ref{lem:rewire_equivalence}.
\end{proof}

\subsection{Numerical Study}
To demonstrate the tightness of the proposed optimality gap in Theorem~\ref{thm:gap}, we have compared it against the \texttt{FORWARD} analytical solution among $4$ distinct distribution networks: IEEE $13$, IEEE $18$, IEEE $33$ and IEEE $37$, where the number indicate the amount of nodes in each network. The case study, as explained in Section~\ref{sec:intro}, has been reduced to small networks in order to compare \texttt{FORWARD} performance with MINLP methods exemplified on \texttt{KNITRO}. As shown in \cite{JV-RB-SK:25}, these kind of commercial methods commonly fail in obtaining a solution for greater size networks.

\begin{table}[t]
\centering
\caption{{Comparison between \texttt{FORWARD} analytical performance and worst-case optimality gap, Theorem~\ref{thm:gap}.}}
\label{tab:sample_table}
{\scriptsize
\begin{tabular}{|c|c|c|c|c|c|c|}
\hline
Network & $m$ & $m^\prime$ & $f(\mathcal{S})$ & $\leq f(\mathcal{S}^\star)$ & $f(\hat{\mathcal{S}}^\star)$ & $P_\geq$ \\ 
\hline
IEEE 13 & 13 & 13 & 360.183 & 196.463 & 360.183 & 1 \\ 
\hline
IEEE 18 & 19 & 19 & 175.821 & 93.0817 & 175.821 & 1 \\ 
\hline
IEEE 33 & 38 & 36 & 594.282 & 305.172 & 318.568 & 0.6316\\ 
\hline
IEEE 37 & 48 & 45 & 598.720 & 399.147 & 623.54 & 0.1875\\
\hline
\end{tabular}
}
\end{table}

As shown in Table~\ref{tab:sample_table}, the proposed gap serves as an accurate optimality gap for \texttt{FORWARD} method. It is known that MINLP methods does not guarantee optimality in the solution, however, its recursive implementation has been used to find the minimum loss configuration experimentally which we use as an optimal estimation $f(\hat{\mathcal{S}}^\star)$. On the other hand, Theorem~\ref{thm:gap} can be rewritten such that
\begin{equation*}
        \frac{a}{b}\cdot f(\mathcal{S}_{N-1}) \leq f(\mathcal{S}^\star)
\end{equation*}
obtaining the expected best case optimal value. By stacking to the obtained results, we can see that with high-probability \texttt{FORWARD} achieves a $2$-approximation solution with respect to the optimal in the worst-case scenario and practically a $1.5$-approximation in the best-case (considering MINLP as the optimal solution).

\begin{figure}[t]
\centering
    \begin{tikzpicture}
    \begin{axis}[
        scale=0.8,
        width=0.95\linewidth,
        height=6.2cm,
        xlabel={$m^\prime/m$},
        ylabel={$P$},
        xmin=0.8, xmax=1.0,
        ymin=-7.8, ymax=1.5,
        domain=0.8:1.0,
        samples=200,
        grid=both,
        minor tick num=1,
        tick label style={font=\small},
        label style={font=\small},
        legend style={
            at={(0.98,0.02)},
            anchor=south east,
            font=\small,
            draw=black,
            fill=white
        },
    ]
    
    \addplot[dashed, thick, black] {0};
    
    
    \addplot[thick, blue] {1 - (2)*(1-x)};
    \addlegendentry{$c=1$}
    
    \addplot[thick, orange] {1 - (12)*(1-x)};
    \addlegendentry{$c=2$}
    
    \addplot[thick, green!60!black] {1 - (22)*(1-x)};
    \addlegendentry{$c=3$}
    
    \addplot[thick, red] {1 - (32)*(1-x)};
    \addlegendentry{$c=4$}
    
    \addplot[thick, purple] {1 - (42)*(1-x)};
    \addlegendentry{$c=5$}
    
    \end{axis}
    \end{tikzpicture}
    \caption{Probability bound $P\geq 1- ((c-1)N+2)\cdot(1-\frac{m^\prime}{m})$ as a function of the ratio $m'/m$ for different values of $c$.}
    \label{fig:probability_bound}
\end{figure}

It is important to observe that Theorem~\ref{thm:gap} is simplified by considering a probabilistic approximation. Hence, it has also been studied how does the probability guarantees evolve with network size. Notice that the object of study are small-world networks where $m=c\cdot N$. As shown in Figure~\ref{fig:probability_bound}, the reliability increases linearly with respect to the percentage of observed edges with respect to the complete edge space size. Additionally, the more density in the network, the steepest is the evolution, which is expected as the more edges, the more edges will remain unobserved. It is important to highlight that the approximation guarantees will only hold with high-probability for a certain range of graph properties.

\section{Conclusions}
\label{sec:conclusion}

This work formalizes the interplay between combinatorial structure and physical feasibility in greedy optimization by introducing a conditioned sequential greedy framework over greedoid equipped with a look-ahead feasibility oracle. We show that enforcing extendability through oracle-restricted candidate sets fundamentally alters the search process, giving rise to a quantifiable \emph{price of feasibility}. The derived approximation bound captures this effect through the variability of oracle space dimensions, complemented by a probabilistic correction accounting for unobserved elements in the search space. By establishing that the FORWARD algorithm is a canonical instance of the proposed framework, we provide its first theoretical optimality guarantees, bridging the gap between empirical performance and formal analysis. Overall, the results highlight a fundamental trade-off between feasibility preservation and optimality, offering a principled lens to design and analyze greedy algorithms in cyber-physical systems where constraints are global, non-local, and only verifiable at completion. Future work will explore adaptive and learned oracle constructions to reduce conservatism and tighten the feasibility-optimality trade-off.


\bibliographystyle{ieeetr}
\bibliography{main}

\appendices
\section{Cost Evaluation for Radial Configurations via the
Flow-Processor Oracle}
\label{app:flow_processor}

The objective in~\eqref{eq:network} is evaluated over the $\mathcal{R}$-rooted spanning forest greedoid $(\mathcal{E}_D,\mathcal{I})$ of Definition~\ref{def:polyforest_greedoid}, equipped with the look-ahead physical feasibility oracle of Definition~\ref{def:oracle}. A central mechanism in this evaluation is the \emph{nodal value}, which combines supply and demand in a single state variable and governs the incremental accumulation of flows as a spanning radial polyforest is constructed edge by edge. Accordingly, the evaluation of $f(\mathcal{S})$ accounts for the construction history of $\mathcal{S}$. We first introduce nodal values and their incremental flow semantics.

\begin{defn}[Nodal Value Vector]
\label{def:nodal_value}
The nodal value vector $\mathbf{p}\in\mathbb{R}^N$ is defined by
\[
p_i =
\begin{cases}
g_i>0, & i\in\mathcal{V}_g \quad \text{(source)},\\
-d_i<0, & i\in\mathcal{V}_c \quad \text{(sink)},
\end{cases}
\]
with global balance $\sum_{i\in\mathcal{V}_D}p_i=0$. At any stage of the construction, $p_i>0$ denotes residual supply at node $i$, whereas $p_i<0$ denotes unmet demand. A sink is fully supplied if and only if $p_i=0$. \boxend
\end{defn}

When a directed edge $e^\star=(i\to j)$ is sampled, the flow assignment is governed by the residual imbalance at its endpoints. Define
\[
\delta:=\min\{p_i,|p_j|\}.
\]
The edge flow is set to $x_{i,j}\leftarrow\delta$, and the nodal values are updated according to
\[
p_i\leftarrow p_i-\delta,
\qquad
p_j\leftarrow p_j+\delta.
\]
If $\delta=|p_j|$, node $j$ becomes fully supplied; otherwise, $p_j<0$ records its remaining unmet demand. These updates enforce flow conservation while preserving the interpretation of nodal values as residual imbalances.

\subsection{Flow Propagation in Multi-Source Polytrees}

Once $\delta$ units of flow are injected at node $j$, they must be routed through the existing polyforest. Because each polytree is directed and acyclic, there is a unique directed path from a supplying source to $i$ and from $j$ to any downstream node. Flow therefore propagates along these unique paths.

Nodal values are essential for \emph{supplier identification}. In a multi-source polytree, topology alone does not identify the source of a given unit of flow. The nodal values remove this ambiguity: the flow $\delta$ is attributed to the source, or relay source, whose residual nodal value decreases by $\delta$. Every edge on the unique directed path from that source to $i$ is then incremented by $\delta$.

\subsection{Merging Two Existing Polytrees}

Suppose a newly sampled edge connects two distinct polytrees $\mathcal{T}_i$ and $\mathcal{T}_j$. Two cases arise, depending on where the injected supply is absorbed.

\paragraph{Case 1: Local shortfall at the connecting node.}
If $p_j<0$, the new edge supplies the shortfall at $j$ directly. The amount $\delta=\min\{p_i,|p_j|\}$ is injected at $j$, and the nodal values are updated as above. Every edge on the unique path from the supplying source in $\mathcal{T}_i$ to node $i$ is incremented by $\delta$. This is the same update used when attaching an unsampled sink component.

\paragraph{Case 2: Downstream shortfall in the receiving polytree.}
If $p_j\geq0$ but $\mathcal{T}_j$ contains unmet demand, then its aggregate nodal value
\[
\bar p_{\mathcal{T}_j}
:=\sum_{v\in\mathcal{T}_j}p_v
\]
is negative. The new edge does not serve $j$ directly; instead, it allows supply to reach a deficit node $k\in\mathcal{T}_j$ through $j$. The injected amount
\[
\delta
=\min\{\bar p_{\mathcal{T}_i},
          |\bar p_{\mathcal{T}_j}|\}
\]
is propagated along two unique directed paths: upstream from the supplying source in $\mathcal{T}_i$ to $i$, and downstream within $\mathcal{T}_j$ from $j$ to $k$. Every edge on these paths is incremented by $\delta$; all other edge flows remain unchanged.

In both cases, the update preserves acyclicity, respects the greedoid constraints of Definition~\ref{def:polyforest_greedoid}, and maintains flow conservation throughout the polyforest.

\subsection{Downstream-Demand Characterization}
At any intermediate stage, the flow on an edge equals the total demand assigned downstream through that edge:
\begin{equation}
x_e(\mathcal{S}) =\sum_{k\in\mathcal{V}_c(\mathcal{S},e)}d_k,
\label{eq:edge_flow}
\end{equation}
where $\mathcal{V}_c(\mathcal{S},e)$ is the set of sink nodes whose unique source path traverses $e$. These downstream sink sets can only grow as edges are added; they never shrink.

Upon termination at a complete spanning radial polyforest $\mathcal{S}\in\mathcal{Q}$, the accumulated flows coincide with the unique solution $\mathbf{x}(\mathcal{S})$ of the balance equation~\eqref{eq:network}, as guaranteed by~\cite[Lemma~4]{JV-RB-SK:25arxiv}.

\subsection{Oracle-Induced Flow Evaluation}

\begin{defn}[Oracle-Induced Flow Realization]
\label{def:oracle_flow}
For every $\mathcal{S}\in\mathcal{Q}$, the flow vector $\mathbf{x}(\mathcal{S})$ is the unique output of the pendant-node inward traversal described above. All marginal costs
\[
\Delta(\mathcal{S},e)
:=f(\mathcal{S}\cup\{e\})-f(\mathcal{S})
\]
are evaluated with respect to this oracle-induced realization. \boxend
\end{defn}

This traversal computes $\mathbf{x}(\mathcal{S})$ and $f(\mathcal{S})$ for every complete $\mathcal{S}\in\mathcal{Q}$ and is equivalent to the Tree-Processor of~\cite{JV-RB-SK:25arxiv}, augmented with cost evaluation.

\subsection{Implications for Supermodularity}

The monotone expansion of the downstream sink sets in \eqref{eq:edge_flow}, together with the quadratic cost, implies that marginal flow increments grow as the base polyforest expands. Hence, under the oracle-induced realization of Definition~\ref{def:oracle_flow}, $f$ is monotone nondecreasing and supermodular in the senses of Definitions~\ref{def:monotone} and~\ref{def:supermodular}, respectively.

\begin{rem}[Oracle-Dependent Supermodularity]
\label{rem:oracle_supermodularity}
The monotone and supermodular properties of $f$ hold with respect to the oracle-induced flow realization of Definition~\ref{def:oracle_flow}. This dependence is intrinsic to cyber-physical optimization problems in which objective values are well-defined only for globally consistent completions, precisely the setting addressed by the conditioned sequential greedy framework.
\end{rem}

\end{document}